\newcommand{\bs}[1]{{\boldsymbol #1}}
\def \data{data}
\def \plots{plots}
\def \fig{fig}
\def \mitgcm{\textsf{JFNK (MITgcm)}}
\def \gascoigne{\textsf{modN.\ (Gascoigne)}}
\def \svnewton{\textsf{svN.\ (Firedrake)}}
\def \stdnewton{\textsf{stdN.\ (Firedrake)}}
\newcommand{\set}[1]{\ensuremath{\mathbb{#1}}}
\newcommand{\N}{\set{N}}
\newcommand{\boldvar}[1]{\ensuremath{\boldsymbol{#1}}}
\newcommand{\boldvec}[1]{\ensuremath{\mathbf{#1}}}
  \renewcommand{\vec}[1]{\boldvec{#1}}
  \newcommand{\vec}[1]{\boldvec{#1}}
\newcommand{\vel}{\boldvar{v}}
\newcommand{\strainratetensor}{\dot{\boldvar{\varepsilon}}}
\newcommand{\auxtau}{\bs\tau}
\definecolor{col-co}{RGB}{255,69,0} %
\definecolor{col-id}{RGB}{30,144,255} %
\theoremstyle{remark}
\newtheorem{theorem}{Theorem}
\begin{document}

\begin{frontmatter}
  \title{
  Robust and efficient primal-dual Newton-Krylov solvers for
  viscous-plastic sea-ice models}  
\author[courant]{Yu-hsuan Shih}
\author[maxplank]{Carolin Mehlmann}
\author[wegener] {Martin Losch}
\author[courant]{Georg Stadler}
\address[courant]{Courant Institute of Mathematical Sciences, New York
  University, New York, NY, USA}
\address[maxplank]{Otto-von-Guericke Universität, Magdeburg, Germany}
\address[wegener]{Alfred-Wegener-Institut, Helmholtz-Zentrum f{\"u}r Polar- und Meeresforschung, Bremerhaven, Germany}


\begin{abstract}
We present a Newton-Krylov solver for a viscous-plastic sea-ice
model. This constitutive relation is commonly used in climate
models to describe the material properties of sea ice.  Due to the
strong nonlinearity introduced by the material law in the momentum equation, the development of fast,
robust and scalable solvers is still a substantial challenge. In this
paper, we propose a novel primal-dual Newton linearization for the
implicitly-in-time discretized momentum equation.  Compared to
existing methods, it converges faster and more robustly with respect
to mesh refinement, and thus enables numerically converged sea-ice
simulations at high resolutions.  Combined with an algebraic multigrid-preconditioned
Krylov method for the linearized systems, which contain strongly
varying coefficients, the resulting solver scales well and can be used
in parallel.  We present experiments for two challenging test problems
and study solver performance for problems with up to 8.4 million
spatial unknowns.
\end{abstract}

\begin{keyword}
viscous-plastic rheology\sep primal--dual\sep stress--velocity Newton\sep
AMG-preconditioned Krylov\sep sea-ice\sep localization
\end{keyword}

\end{frontmatter}


\section{Introduction}\label{sec:intro}
Satellite images show that the sea-ice cover is characterized by local
linear formations, so-called linear kinematic features (LKFs) \cite{Kwok2008}. These
LKFs are associated to leads and pressure ridges (i.e., regions of low
or high sea-ice density due to diverging or converging ice motion)
and play a major role
on sea-ice formation and the ocean-ice-atmosphere interaction.  To
represent LKFs in sea-ice models, the sea-ice rheology
plays a crucial role. Most sea-ice models used in climate studies treat
sea-ice as a continuum with a viscous-plastic (VP) constitutive law \cite{Blockley2020},
although alternative rheologies have been suggested
\cite{Tsamados2013, Girard2011, Rampal2016}. These approaches are all
based on the continuum assumption, which implies that statistical
averages can be derived from a large number of ice floes.
The underlying assumptions of the continuum approach break down at high resolutions and 
there are recently suggested approaches that resolve discrete floes 
\cite{Wilchinsky2012, Herman2015,
  WestOConnorParnoEtAl21, Damsgraad2018, TuhkuriPolojarvi18}.  However, the
majority of sea-ice models in climate simulations are still using the VP rheology and
will do so in the foreseeable future \cite{Blockley2020}. 
Recent analyses demonstrated
that models based on the VP rheology reproduce scaling and
statistics of observed LKFs  \cite{Hutter2020}. However, resolving LKFs in the
viscous-plastic sea-ice model is computational very costly as LKFs
require high spatial resolution \cite{Lemieux2012,Koldunov19}. Furthermore, the VP sea-ice model is stiff and resolving
LKFs typically requires
that a non-linear solver performs a large number of iterations, and this number grows with
increasing mesh resolution.

\paragraph{Viscous-plastic sea-ice solvers}
Currently there are mainly two solution approaches for VP
sea-ice models. Firstly, and most commonly, explicit time stepping procedures for the momentum equation are used. Here, as proposed by
\cite{HunkeDukowicz1997}, pseudo-elastic terms are added to sea-ice
stresses, which leads to the elastic-viscous-plastic (EVP) method.
The additional pseudo-elastic term increases the order of the time
derivative of the coupled momentum-stress equations, making the system
less stiff and thus numerically easier to solve.
The resulting momentum equation can be
integrated explicitly, albeit using a large number of small time steps
for each outer time step of the sea-ice model.

With recent modifications to EVP \cite{Boullion2013, Lemieux2012,
  Kimmritz2015}, accurate VP solutions are possible in principle, but
because the convergence rate of the method is linear and slow for
high resolution simulations, practical solvers
limit the number of
iterations and hence the accuracy \cite{Koldunov19}. In spite of
this, EVP is the most commonly used method of solving the momentum
equation in sea-ice climate models.
Secondly, implicit methods for the momentum
equation are used. These range from the application of a few Picard
iterations per time step \cite{ZhangHibler1997}, to more sophisticated
Newton solvers with better convergence properties
\cite{Lemieux2010,Losch2014,MehlmannRichter2016newton}. 
Fully converged solutions with Newton solvers are still very expensive
for climate-type applications, which is why they have been mostly used  as reference solutions for the VP model to evaluate non-optimal solvers such as EVP \cite[e.g.,][]{Lemieux2012, KIMMRITZ2017}.

Typically, the simulation time per unknown increases substantially
with the problem size for both, available explicit and implicit solvers. 
For explicit solvers, this is due to the
fact that resolving the VP rheology requires an
increasing number of small time steps, and solver stabilization
depends on the spatial discretization and hence results in a slower convergence \cite{Kimmritz2015, Danilov2021}.
For implicit solvers, typically the
number of Newton or Picard iterations increases upon mesh
refinement, and the linearized systems become more
ill-conditioned and thus iterative linear solvers converge more slowly. Direct linear
solvers do not suffer from this limitation, but they cannot be used
for large systems due to memory limitations. Even when memory is
not a limiting factor, the solve time of direct linear solvers grows
substantially faster-than-linear with the number of unknowns.
Currently used solvers for the linearized system include relaxation methods such as
the line successive over-relaxation \cite{ZhangHibler1997,
  Lemieux2009, Losch2014} and geometric multigrid
\cite{MehlmannRichter2016mg}.


\paragraph{Approach and related work}
In this paper, we focus on implicit Newton-type solvers for the
momentum equation. We address the common growth of nonlinear and
linear iterations for increasing problems size through (1) a novel
Newton linearization, and (2) the use of algebraic multigrid (AMG)
preconditioning for the arising linear problems.  The novel Newton
iteration is based on a temporary introduction of an independent
stress variable during the linearization. This idea is  similar to the
introduction of a dual variable in primal-dual interior point methods
\cite{Wright97}. For nonlinear physics problems, similar approaches
have successfully been used for non-Newtonian fluids
\cite{RudiShihStadler20, DelosreyesGonzalez09} or total variation
image regularization \cite{HintermullerStadler06,
  ChanGolubMulet99}. To solve the large-scale linear matrix systems
arising upon linearization, we propose an AMG-preconditioned Krylov solver.
Since these systems involve
strongly varying and anisotropic coefficients as results of the
VP constitutive relation, we find it beneficial to
adjust default parameters in AMG, for which we use the parallel Hypre
library \cite{FalgoutYang02}.

\paragraph{Contributions and limitations}
In this paper, we make the following contributions.
(1) We propose an alternative Newton linearization for the sea-ice
momentum equation whose number of Newton steps is insensitive to the
mesh resolution and to regularization parameters.
(2) We propose an AMG-preconditioned Krylov method to
solve the Newton linearizations efficiently and in parallel and
present results for problems with up to 8.4 million spatial unknowns.
(3) We provide an implementation of our method in the open-source
Firedrake library and present performance comparisons with other
commonly used sea-ice solvers for benchmark problems.

Our approach also has some limitations.
(1) The proposed method requires the introduction of an additional
tensor variable. However, this variable is updates explicitly and the
linear systems to be solved in each Newton step are of the same size
as for other Newton or Picard methods.
(2) Despite substantial improvements, we do not achieve perfect weak
scalability, i.e., when the unknowns and the compute resources are
increased at the same rate, the solution time does not remain constant
but increases, although moderately. Besides known challenges to
achieving perfect scalability, this is also due to
the fact that upon mesh refinement, the VP constitutive
relation yields ever-smaller features, making the physics problem more
challenging.



\section{Governing Equations}\label{sec:equations}
Let $\Omega\subset \mathbb{R}^2$ be the spatial sea-ice domain, and
$I_t := [0, T]\subset \mathbb{R}$ the time interval over which we
consider the sea-ice evolution. Following \cite{Hibler1979}, we consider the following system of
equations for the sea-ice concentration $A(t,\bs x)\in [0,1]$, the mean
sea-ice thickness $H(t,\bs x) \geq 0$ and sea-ice velocity $\vel(t,\bs
x) \in \mathbb{R}^2$, for all $(t,\bs x)\in I_t\times \Omega$:
\begin{subequations}\label{eq:model}
\begin{alignat}{2}
    \rho_{\text{ice}}H\left(\partial_t\vel+f_c\bs e_r\times(\vel-\vel_o)\right) 
    &=\nabla\cdot\boldvar\sigma + \boldvar\tau_{\text{ocean}}(\vel,t) + \boldvar\tau_{\text{atm}}(t)&&\quad\text{ in }(0,T)\times\Omega,\label{eq:momen}\\
    \partial_t A + \nabla\cdot\left(\vel A\right)&=0&&\quad\text{ in }(0,T)\times\Omega, \label{eq:model:A}\\
    \partial_t H + \nabla\cdot\left(\vel H\right)&=0&&\quad\text{ in }(0,T)\times\Omega,\label{eq:model:H}
\end{alignat}
\end{subequations}
where $\rho_{\text{ice}}$ are a given sea-ice density, $f_c$ is the
Coriolis parameter and $\bs e_r$ is the unit vector normal to the
surface.
The internal stresses $\boldvar\sigma=\boldvar\sigma(\vel, A, H)$ in
the sea-ice are modeled by the VP rheology, \cite{Hibler1979}
\begin{equation}
    \boldvar\sigma := 2\eta\strainratetensor'(\vel) + \zeta \text{tr}(\strainratetensor(\vel)) I - \frac{P}{2} I
\end{equation}
with viscosities $\eta,\,\zeta$ given by
\begin{equation*}
\eta = e^{-2}\zeta,\quad \zeta = \frac{P}{2\Delta(\vel)}, \quad \text{where }
\end{equation*}
\begin{equation}\label{eq:Delta}
\Delta(\vel) :=\sqrt{2e^{-2}\strainratetensor'(\vel):\strainratetensor'(\vel) + \text{tr}(\strainratetensor(\vel))^2 + \Delta_{\text{min}}^2},
\end{equation}
$\strainratetensor(\vel):=\frac
12(\nabla \vel+\nabla \vel^T)\in \mathbb R^{2\times 2}$ is the strain rate
tensor, $\strainratetensor'(\vel):=\strainratetensor(\vel) - \frac 12 \text{tr}(\strainratetensor(\vel))\bs I \in \mathbb R^{2\times 2}$ is the deviatoric strain rate tensor and ``:'' denotes the Frobenius inner product between tensors.  Moreover, $e>0$ is the ratio of the main axes of the elliptic yield curve (typically, $e=2$).
Following \cite{KreyscherHarderLemkeEtAl00}, we use a smooth transition between
plastic and viscous states using a small constant
$\Delta_{\text{min}}>0$ in \eqref{eq:Delta}.
The spatially varying ice strength $P=P(\bs x)$ is modeled as
\begin{equation}\label{eq:ice-strength}
    P = P^*H\exp(-C(1-A))
\end{equation}
with constants $P^*$ and $C$.
The forcing from atmosphere and ocean are given by
\begin{equation}\label{eq:force}
    \bs\tau_\text{atm}(t) = C_a\rho_a \|\vel_{\text{a}}(t)\|_2 \vel_{\text{a}}(t) \quad\text{and}\quad 
    \bs\tau_\text{ocean}(\vel, t) = C_o\rho_o \|\vel_o(t) - \vel \|_2 (\vel_o(t)-\vel),
\end{equation}
respectively, where $\|\cdot\|_2$ denotes the Euclidean norm, $C_a$
and $C_o$ are the atmospheric and ocean drag coefficients, $\rho_a$
and $\rho_o$ the densities, and $\vel_a$ and $\vel_o$ the velocity
fields of near surface atmospheric and oceanic flows. 

To complete the formulation of the sea-ice system, we use the following initial and boundary conditions: 
\begin{alignat*}{2}
    \vel&=\vel_0&&\quad\text{ on } \{t=0\}\times \Omega,\\
    A = A_0,\quad H &= H_0&&\quad\text{ on } \{t=0\}\times \Omega,\\
    \vel &= \boldvar 0&&\quad\text{ on }I_t\times \partial\Omega,\\
    A = A^{\text{in}},\quad H &= H^{\text{in}} &&\quad\text{ on } I_t\times\Gamma^{\text{in}},
\end{alignat*}
where $\Gamma_{\text{in}}:=\{\boldvar x\in
\partial\Omega~\vert~\boldvar n\cdot \vel < 0\}$ denotes the part of
the boundary $\partial \Omega$ with incoming
characteristics. Recently, analytical results these equations have been reported
\cite{LiuThomasTiti21, BrandtDisserHallerEtAl21}. However, we focus on
the classical system equations \eqref{eq:model} and study solvers for
the time-discretized equations.
\section{Time discretization and implicit momentum equation time step}
To discretize \eqref{eq:model} in time, we use an explicit method for
the hyperbolic equations \eqref{eq:model:A} and \eqref{eq:model:H},
and an implicit method for the momentum equation
\eqref{eq:momen}. That is, given the triple $(\vel^n, A^n, H^n)$ at
time $t_n$, these variables are advanced to time $t_{n+1}:=t_n+\delta
t$ as follows. First, an explicit time step (e.g., an explicit Euler
step) is performed to compute the concentration and height variables
$A^{n+1}$ and $H^{n+1}$, respectively. Due to the stiffness of the
momentum equation \eqref{eq:momen}, using an explicit method for
computing the velocity $\vel^{n+1}$  requires extremely small time
steps \cite{Hibler1991}. Thus, here we use an implicit time stepping
method, namely the implicit Euler method.\footnote{As not uncommon in sea
ice models, the (typically small) Coriolis force term is treated
explicitly. This results in a symmetric system matrix.} Other implicit time stepping methods lead to similar
nonlinear problems to be solved in each time step.

Next, we show that the result $\vel^{n+1}$ of this implicit step can also be
found as minimizer of an appropriately chosen convex energy minimization
problem.  The formulation as an energy minimization problem builds on
\cite{MehlmannRichter2016newton, RichterMehlmann2018, Mehlmann2019},
where it is shown that
the stress tensor can be represented in symmetric form and the
derivative of the momentum equation is positive definite, indicating
the underlying energy minimization problem. We consider the
functional
$\Phi: V \to \mathbb R$
defined over a function space $V\supset H^1_0(\Omega)^2$.
\begin{equation}\label{eq:energy}
\begin{aligned}
     \Phi (\vel) :=& \int_{\Omega} \frac{1}{2}\rho_{\text{ice}} H^{n+1} \|\vel\|_2^2 d\bs x - \int_\Omega \rho_{\text{ice}} H^{n+1} \vel^{n}\cdot \vel d\bs x\\
    &+ \delta t \left(\int_\Omega \frac{P^{n+1}}{2} \big[\Delta(\vel) - \text{tr}(\strainratetensor(\vel))\big] d\bs x +  \int_{\Omega}\frac{1}{3} C_o\rho_o\|\vel_o - \vel\|_2^3d\bs x\right)\\
    &
    + \delta t\left(\int_\Omega \rho_{\text{ice}}H^{n+1} f_c\bs e_r\times (\vel^n-\vel_o)\cdot \vel d\bs x -\int_\Omega \bs\tau_\text{atm}(t_{n+1})\cdot \vel d\bs x\right).
\end{aligned}
\end{equation}
Note that the space $V$ over which $\Phi$ is well-defined is slightly
larger than $H^1_0(\Omega)^2$ as a finite value of $\Phi(\vel)$ does
not require that $\vel$ has square-integrable derivatives.
Identifying the appropriate space $V$ for the semi-discretized problem
rigorously and proving solution existence/uniqueness is beyond the
scope of this paper. We refer to comments in \cite{Mehlmann2019}
and for a function space analysis of the time-continuous problem to
\cite{BrandtDisserHallerEtAl21, LiuThomasTiti21}. However, in the next
theorem we show that $\Phi$ is convex and bounded from below, and that
the implicit time step equations can be derived as first-order
necessary conditions for a minimizer of $\Phi$. For simplicity, we
restrict ourselves to $\vel \in H_0^1(\Omega)^2$.
\begin{theorem}
  The functional $\Phi$ defined in \eqref{eq:energy} is convex and bounded from below.
  Moreover, the solution at next time step
  $\vel^{n+1}\in V$ satisfies the stationary condition
  $\Phi'(\vel^{n+1})(\bs \phi)=0$ for all $\bs \phi\in V$.
\end{theorem}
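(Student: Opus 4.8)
The plan is to verify the three claims separately, exploiting the additive structure of $\Phi$ in \eqref{eq:energy} and assuming throughout that the data ($H^{n+1}\ge 0$, $P^{n+1}\ge 0$, $\vel^{n}$, $\vel_o$, $\boldvar\tau_{\text{atm}}(t_{n+1})$) are regular enough that every integral is finite on $H^1_0(\Omega)^2$. \textbf{Convexity.} I would check term by term. The three terms that are linear in $\vel$ — namely $-\int_\Omega\rho_{\text{ice}}H^{n+1}\vel^{n}\cdot\vel$, the lagged Coriolis term $\delta t\int_\Omega\rho_{\text{ice}}H^{n+1}f_c\bs e_r\times(\vel^{n}-\vel_o)\cdot\vel$, and $-\delta t\int_\Omega\boldvar\tau_{\text{atm}}(t_{n+1})\cdot\vel$ — are affine, hence convex. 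The term $\tfrac12\int_\Omega\rho_{\text{ice}}H^{n+1}\|\vel\|_2^2$ is convex because its integrand is a nonnegative multiple of $\|\vel\|_2^2$. For $\tfrac12\int_\Omega P^{n+1}\big[\Delta(\vel)-\text{tr}(\strainratetensor(\vel))\big]$, the key point is that, by \eqref{eq:Delta}, $\Delta(\vel)$ equals the Euclidean/Frobenius norm of $\big(\sqrt2\,e^{-1}\strainratetensor'(\vel),\ \text{tr}(\strainratetensor(\vel)),\ \Delta_{\text{min}}\big)$, which is an affine function of $\vel$ since $\vel\mapsto(\strainratetensor'(\vel),\text{tr}(\strainratetensor(\vel)))$ is linear; a norm composed with an affine map is convex, and subtracting the linear functional $\text{tr}(\strainratetensor(\vel))$ and integrating against the nonnegative weight $P^{n+1}$ preserves convexity. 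Finally, $\tfrac13\int_\Omega C_o\rho_o\|\vel_o-\vel\|_2^3$ is convex because $s\mapsto s^3$ is convex and nondecreasing on $[0,\infty)$ while $\vel\mapsto\|\vel_o-\vel\|_2$ is convex, so the composition is convex (and $C_o\rho_o\ge0$). A finite sum of convex functionals is convex.

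\textbf{Boundedness from below.} I would show that all nonlinear terms are nonnegative and that the cubic ocean-drag term dominates the remaining linear ones. Since $\Delta(\vel)^2\ge\text{tr}(\strainratetensor(\vel))^2$ by \eqref{eq:Delta}, we get $\Delta(\vel)\ge|\text{tr}(\strainratetensor(\vel))|\ge\text{tr}(\strainratetensor(\vel))$, hence $\Delta(\vel)-\text{tr}(\strainratetensor(\vel))\ge0$; together with $P^{n+1}\ge0$, $H^{n+1}\ge0$, $C_o\rho_o\ge0$ this leaves $\Phi(\vel)\ge\tfrac{\delta t\, C_o\rho_o}{3}\int_\Omega\|\vel_o-\vel\|_2^3 - C\,\|\vel\|_{L^2(\Omega)}$, where $C$ collects the data-dependent constants arising from bounding the three linear terms by Cauchy--Schwarz. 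Using $(a+b)^3\le4(a^3+b^3)$ for $a,b\ge0$ (convexity of the cube), one has $\|\vel_o-\vel\|_2^3\ge\tfrac14\|\vel\|_2^3-\|\vel_o\|_2^3$ pointwise, hence $\int_\Omega\|\vel_o-\vel\|_2^3\ge\tfrac14\|\vel\|_{L^3(\Omega)}^3-\|\vel_o\|_{L^3(\Omega)}^3$; combining this with $\|\vel\|_{L^2(\Omega)}\le|\Omega|^{1/6}\|\vel\|_{L^3(\Omega)}$ and Young's inequality, the cubic term absorbs the linear one and $\Phi(\vel)\ge c\,\|\vel\|_{L^3(\Omega)}^3-C'\ge-C'$ with $c>0$ and $C'$ depending only on the data. (If $H^{n+1}$ were bounded below by a positive constant, the quadratic term would already suffice, but the cubic term handles the general case, e.g.\ open water where $H^{n+1}=0$.)

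\textbf{Stationarity.} Because $\Delta(\vel)\ge\Delta_{\text{min}}>0$ the map $\vel\mapsto\Delta(\vel)$ is smooth, and $s\mapsto\|s\|_2^3$ is $C^1$, so $\Phi$ is G\^ateaux differentiable on $H^1_0(\Omega)^2$; I would compute $\Phi'(\vel)(\bs\phi)$ term by term. The first two terms contribute $\int_\Omega\rho_{\text{ice}}H^{n+1}(\vel-\vel^{n})\cdot\bs\phi$; the cubic term contributes $-\delta t\int_\Omega\boldvar\tau_{\text{ocean}}(\vel)\cdot\bs\phi$, since the derivative of $\tfrac13\|\vel_o-\vel\|_2^3$ in direction $\bs\phi$ is $\|\vel_o-\vel\|_2(\vel-\vel_o)\cdot\bs\phi$; the lagged Coriolis and atmospheric terms contribute their affine derivatives. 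For the remaining term, using $\zeta=P/(2\Delta)$, $\eta=e^{-2}\zeta$ and that $\strainratetensor'(\vel)$ is trace-free (so $\strainratetensor'(\vel):\strainratetensor(\bs\phi)=\strainratetensor'(\vel):\strainratetensor'(\bs\phi)$ and $\bs I:\strainratetensor(\bs\phi)=\text{tr}(\strainratetensor(\bs\phi))$), differentiating $\tfrac{P^{n+1}}{2}[\Delta(\vel)-\text{tr}(\strainratetensor(\vel))]$ in direction $\bs\phi$ gives exactly $\boldvar\sigma(\vel):\strainratetensor(\bs\phi)$. Assembling the pieces, setting $\Phi'(\vel^{n+1})(\bs\phi)=0$, dividing by $\delta t$, and integrating by parts (using symmetry of $\boldvar\sigma$ and $\bs\phi|_{\partial\Omega}=0$) to replace $\int_\Omega\boldvar\sigma:\strainratetensor(\bs\phi)$ by $-\int_\Omega(\nabla\cdot\boldvar\sigma)\cdot\bs\phi$ recovers exactly the weak form of the implicit (Euler) time step for \eqref{eq:momen} with the Coriolis term lagged; hence the time-step solution $\vel^{n+1}$ satisfies the stated stationarity condition.

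The step I expect to require the most care is the derivative identity $D_\vel\big[\tfrac{P}{2}(\Delta(\vel)-\text{tr}(\strainratetensor(\vel)))\big](\bs\phi)=\boldvar\sigma(\vel):\strainratetensor(\bs\phi)$ — i.e.\ that the VP stress is the gradient of the potential in \eqref{eq:energy} — and, for boundedness below, the realization that coercivity cannot be drawn from the quadratic term (which degenerates where $H^{n+1}=0$) and must instead come from the cubic ocean-drag term.
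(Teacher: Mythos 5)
Your proposal is correct and follows essentially the same route as the paper: term-by-term convexity (with $\Delta(\vel)$ convex as a norm of an affine function of $\vel$), nonnegativity of $P^{n+1}[\Delta(\vel)-\text{tr}(\strainratetensor(\vel))]$ plus domination of the linear terms for boundedness below, and the same term-by-term first-variation computation identifying $\Phi'(\vel)(\bs\phi)=0$ with the weak implicit time step. You supply details the paper leaves implicit — in particular the observation that coercivity must come from the cubic ocean-drag term where $H^{n+1}$ vanishes — but the underlying argument is the same.
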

\begin{proof}
The functional $\Phi$ is convex in the velocity $\vel$ and
in the strainrate tensor $\strainratetensor(\vel)$. This follows since
the terms in \eqref{eq:energy} are either linear in $\vel$, or
quadratic or cubic in $\|\vel\|$, and since $\Delta(\vel)$ is convex
in $\strainratetensor(\vel)$, which itself is a convex function of
$\vel$.  Moreover, the coefficients of the quadratic and
cubic terms are pointwise non-negative. 
The functional is bounded from below as terms in which $\vel$ appears linearly are dominated by quadratic or third-order terms in $\vel$. 
Moreover,
$\Delta(\vel) - \text{tr}(\strainratetensor(\vel)\ge 0$ for all $x\in
\Omega$ by definition of $\Delta(\vel)$.

Let us now compute the first variations of $\Phi(\vel)$, i.e.,
$\Phi'(\vel)(\bs\phi)$ following basic methods from variational calculus.
For this purpose, we use the identities
\begin{equation}\label{eq:minPhi}
\begin{aligned}
\left(\frac{1}{2}\rho_{\text{ice}} H^{n+1} \|\vel\|_2^2\right)'(\vel)(\bs \phi) &= \rho_{\text{ice}} H^{n+1} \vel\cdot \bs \phi,\\
\left(\frac{1}{3} C_o\rho_o\|\vel_o - \vel\|_2^3\right)'(\vel)(\bs\phi)&=
C_o\rho_o\|\vel_o - \vel\|_2^2 \left(\|\vel_o - \vel\|_2\right)'(\vel)(\bs\phi)= C_o\rho_o\|\vel_o - \vel\|_2\left(\vel_o - \vel\right)\cdot \bs\phi,\\
\left(\frac{P^{n+1}}{2} \Delta(\vel)\right)'(\vel)(\bs \phi) &=
\frac{P^{n+1}}{4\Delta(\vel)}\Big(2e^{-2}\strainratetensor'(\vel):\strainratetensor'(\vel) + \text{tr}(\strainratetensor(\vel))^2 + \Delta_{\text{min}}^2\Big)'(\vel)(\bs\phi)\\
&=
\frac{P^{n+1}}{4\Delta(\vel)}4e^{-2}\strainratetensor'(\vel):\strainratetensor'(\bs\phi) + \frac{P^{n+1}}{2\Delta(\vel)}\text{tr}(\strainratetensor(\vel))\text{tr}(\strainratetensor(\bs\phi))\\
&=2\eta\strainratetensor'(\vel):\strainratetensor'(\bs\phi) + \zeta\text{tr}(\strainratetensor(\vel))\text{tr}(\strainratetensor(\bs\phi))\\
&=\left(2\eta\strainratetensor'(\vel) + \zeta\text{tr}(\strainratetensor(\vel))I\right):\nabla\bs\phi.\\
\end{aligned}
\end{equation} 
%
Note that above we used that $\strainratetensor':\bs I=0$.
Thus, $\Phi'(\vel)(\bs\phi)=0$ is equivalent
to
\begin{subequations}\label{eq:t-step}
\begin{equation}\label{eq:Av=F}
\mathcal{A}(\vel^{n+1}, \bs\phi) = F(\bs\phi),
\end{equation}
where $\mathcal{A}(\cdot\,, \cdot)$ and $F(\cdot)$ are defined as
\begin{align}
\mathcal{A}(\vel^{n+1}, \bs \phi) &:= (\rho_{\text{ice}}H^{n+1} \vel^{n+1}, \bs \phi)+\delta t (\bs \sigma(\vel^{n+1}, A^{n+1}, H^{n+1}), \nabla \bs \phi) - 
\delta t (\bs\tau_\text{ocean}(\vel^{n+1})), 
\label{eq:A}\\
F(\bs \phi)&:= (\rho_{\text{ice}}H^{n+1} \vel^{n}, \bs \phi) -
\delta t (\rho_{\text{ice}}H^{n+1} f_c\bs e_r\times (\vel^n-\vel_o), \bs \phi)+ \delta t (\bs\tau_\text{atm}(t_{n+1}), \bs \phi).\label{eq:F}
\end{align}
\end{subequations}
Here, $(\cdot\,,\cdot)$ denotes the standard $L_2$-inner product for vectors or matrices. Thus, \eqref{eq:Av=F}--\eqref{eq:F} is also the weak form of an implicit time step for the momentum
equation \eqref{eq:momen}.
\end{proof}

The above results shows that the new time level $\vel^{n+1}$ can be
found by solving
\begin{equation}\label{eq:Phi-opt}
  \min_{\vel \in V} \Phi (\vel)
\end{equation}
with an appropriately defined function space $V$. Such a variational
calculus perspective may be useful
to establish rigorous existence and uniqueness results for
\eqref{eq:Av=F}. 
If $V$ is approximated by a finite-dimensional
space $V_h$, e.g., a space of piecewise polynomials as common in
finite element methods, then it is straightforward to show that the (now
fully discrete) time step $\vel^{n+1}_h$ is the solution to a
finite-dimensional convex optimization problem.

The optimization formulation \eqref{eq:Phi-opt} also shows that the
linearization of \eqref{eq:Av=F}, which is the second variation of
$\Phi$, must be symmetric. This is also discussed in
\cite{MehlmannRichter2016newton}.  Additionally, the objective
$\Phi(\cdot)$ can also be used in an iterative method to verify that
progress has been made towards the computation of $\vel^{n+1}$.
In particular, we use decay with respect to $\Phi$ in a line search
procedure in our Newton methods instead of the more commonly used
nonlinear residual norm.

Note that solving the momentum equation \eqref{eq:t-step} (or its
spatial discretizations) remains
challenging due to the nonlinearity reflecting the VP
constitutive relation. The main focus of this paper is on Newton-type
methods to solve this equation robustly and efficiently. In
particular, we will propose a novel linearization method for
\eqref{eq:Av=F} in the next section.

\section{Newton methods for the momentum equation}\label{sec:newton}
For convenience of the notation, we introduce some notation 
following \cite[Eq.(5.13)]{Mehlmann2019}. Namely, we define the tensor
\begin{equation*}
\auxtau(\vel):=e^{-1}\strainratetensor' + \frac
12\text{tr}(\strainratetensor)\bs I,
\end{equation*}
which allows us to write, since $\strainratetensor':\bs I=0$, that
\begin{equation*}\label{eq:sigma-delta}
  \Delta(\vel) =
  \sqrt{\Delta_{\text{min}}^2 + 2\auxtau(\vel): \auxtau(\vel)}.
\end{equation*}
Following \cite{Mehlmann2019}, 
the term involving $\bs \sigma = \bs\sigma(\vel^{n+1}, A^{n+1}, H^{n+1})$ in \eqref{eq:A} can be written as
\begin{equation}\label{eq:sigma}
    (\bs \sigma, \nabla \bs\phi) 
  = \left(\frac{P^{n+1}}{\Delta(\vel)}
  \auxtau(\vel), \auxtau(\bs\phi)\right) - 
\left(\frac{P^{n+1}}{2}, \text{tr}(\strainratetensor(\bs\phi))\right),
\end{equation}
where $P^{n+1}$ is the ice strength \eqref{eq:ice-strength} evaluated
at $H^{n+1}$ and $A^{n+1}$ and $(\cdot\,,\cdot)$ is the $L^2$-inner product of two matrix functions.

\subsection{Standard Newton method}
A Newton step to solve \eqref{eq:Av=F} is of the following form:
Compute the Newton update $\tilde{\vel}\in V$ such that
\begin{align}\label{eq:Newton-primal}
	\mathcal{A}'(\vel_{l})(\tilde{\vel},\bs \phi) &= F(\bs \phi) -  A(\vel_{l},\bs \phi) \quad \text{for all } \bs \phi\in V,
\end{align}
and then perform the Newton update step $\vel_{l+1} :=\vel_{l} + \alpha \tilde{\vel}$ with a step length $\alpha\le 1$. Here, $l$ denotes the index for the Newton iteration to compute the velocity $\vel^{n+1}$.
The Jacobian on the left hand side in \eqref{eq:Newton-primal} is given by
\begin{subequations}\label{eq:standardNewton}
\begin{equation}\label{eq:standardNewton1}
\begin{aligned}
	\mathcal{A}'(\bs v_{l})(\tilde{\vel},\bs \phi)
= (\rho_{\text{ice}}H^{n+1} \tilde{\vel}, \bs \phi)& \\[1ex] 
    + \delta t 
    \left(\frac{P^{n+1}}{\Delta(\vel_l)}\auxtau(\tilde{\vel}), \auxtau(\bs\phi)\right)&
    -\delta t\left(2P^{n+1}\frac{\auxtau(\vel_l)\otimes\auxtau(\vel_l)}{\Delta(\vel_l)^3}\auxtau(\tilde{\vel}), \auxtau(\bs\phi)\right)
    - \delta t \bs\tau_\text{ocean}'(\bs v_{l})(\tilde{\vel},\bs \phi),
\end{aligned}
\end{equation}
where 
\begin{equation}\label{eq:standardNewton2}
\bs\tau_\text{ocean}'(\bs v)(\tilde{\vel},\bs \phi)=
\left(-\rho_o C_o \|\vel_o - \vel\|_2 \tilde{\vel}, \bs \phi \right) +
\left(-\rho_o C_o \frac{(\vel_o - \vel)^T\tilde{\vel}}{\|\vel_o - \vel\|_2} (\vel_o - \vel), \bs \phi \right).
\end{equation}
\end{subequations}
Here, $\otimes$ in \eqref{eq:standardNewton1} denotes the outer product between two matrices, whose result is a 4th-order tensor, and we have used the identity $(\bs a:\bs b)\bs c = (\bs a\otimes \bs c)\bs b$ for matrices $\bs a, \bs b, \bs c$. 
Hence, the term involving $\otimes$ in \eqref{eq:standardNewton1} equals 
$\delta t\left({2P^{n+1}}{(\Delta(\vel_l))^{-3}}(\auxtau(\vel_l):\auxtau(\tilde{\vel}))\auxtau(\vel_l),\auxtau(\bs\phi)\right)
=\delta t\left( {2P^{n+1}}{(\Delta(\vel_l))^{-3}}(\auxtau(\vel_l):\auxtau(\tilde{\vel})),\auxtau(\vel_l):\auxtau(\bs\phi)\right)$,
which is also how it is implemented based on weak forms.
To summarize, each Newton iteration amounts to solving a linear system with the operator as defined in \eqref{eq:standardNewton1}. 
\subsection{A stress--velocity Newton method}\label{sec:svnewton}
Next, we present an alternative formulation for the nonlinear equation \eqref{eq:Av=F}. The main difference of this formulation is that it uses a reformulation of the VP nonlinearity that is 
better suited for Newton linearization.
To arrive at this alternative formulation, we introduce the independent variable
\begin{equation}\label{eq:pi}
\bs \pi := \frac{\auxtau(\bs v)}{\sqrt{\Delta_{\min}^2 +
  2\auxtau(\bs v):\auxtau(\bs v)}} \in \mathbb R^{2\times 2}.
\end{equation}
Since the denominator is always positive, the definition is equivalent to the nonlinear equation
\begin{equation}\label{eq:NCPreformulation}
\bs r(\bs \pi,\bs v):= \bs \pi \sqrt{\Delta_{\min}^2 +
  2\auxtau(\bs v):\auxtau(\bs v)} - \auxtau(\bs v) = 0.
\end{equation}
With this auxiliary variable $\bs \pi$, \eqref{eq:sigma} simplifies to
\begin{equation}\label{eq:sigma-pi}
\left(P^{n+1}\bs \pi,\auxtau(\bs
  \phi)\right) - \left(\frac{P^{n+1}}{2},\text{tr}(\strainratetensor(\bs\phi))\right) := B(\vel, \bs\pi, \bs\phi).
\end{equation}
Substituting  \eqref{eq:sigma-pi} for \eqref{eq:sigma} in \eqref{eq:Av=F} and adding \eqref{eq:res=0} as a new equation, we obtain a new formulation of the momentum equation \eqref{eq:Av=F} for unknowns 
$(\vel, \bs\pi)$
\begin{subequations}
\begin{align}
 (\rho_{\text{ice}}H^{n+1} \vel, \bs \phi) + 
    \delta t B(\vel,\bs\pi,\bs\phi) - 
    \delta t (\bs\tau_\text{ocean}(\vel)) &= F(\bs\phi)\label{eq:newAv=f}\\
    \bs r(\bs \pi,\bs v) &= 0\label{eq:res=0}.
\end{align}
\end{subequations}
The corresponding Newton method: Find a Newton update
$\tilde{\vel}$ and $\tilde{\bs \pi}$ such that 
\begin{subequations}
\begin{align}
    (\rho_{\text{ice}}H^{n+1} \tilde{\vel}, \bs \phi)
    + \delta t B'(\vel_l,\bs\pi_l)(\tilde{\vel},\tilde{\bs \pi},\bs\phi)
    - \delta t \bs\tau_\text{ocean}'(\bs v_{l})(\tilde{\vel},\bs \phi) &=\notag\\
    \quad\quad\quad\quad\quad\quad F(\bs\phi) 
    - (\rho_{\text{ice}}H^{n+1} \vel_l, \bs \phi)
    - \delta t B(\vel_l,\bs\pi_l,\bs\phi) &
    + \delta t \bs\tau_\text{ocean}(\vel_l)\\
    \bs r'(\bs \pi_l,\bs v_l)(\tilde{\bs \pi},\tilde{\vel}) &= -\bs r(\bs \pi_l,\bs v_l)\label{eq:sv-r}
\end{align}
\end{subequations}
for all $\bs\phi\in V$, and then perform the Newton update step  
$ \bs v_{l+1} :=\bs v_{l} + \alpha \tilde{\vel}$ and
$ \bs \pi_{l+1} :=\bs \pi_{l} + \alpha \tilde{\bs \pi}$.
with a step length $\alpha\le 1$.

The Gateaux derivative $\bs r'(\bs \pi,\bs v)$ with respect to $\vel$ and $\bs\pi$ can be computed as follows:
\begin{equation}\label{eq:R'}
    \bs r'(\bs \pi,\bs v)(\tilde{\bs\pi}, \tilde{\vel}) 
    = \tilde{\bs \pi}\Delta(\bs v) + \frac{2\auxtau(\tilde{\vel}):\auxtau(\vel)}{\Delta(\bs v)}\bs\pi - \auxtau(\tilde{\vel}) 
    = \tilde{\bs \pi}\Delta(\bs v) + \frac{2\auxtau(\bs v)\otimes \bs \pi}{\Delta(\bs v)}\auxtau(\tilde{\vel}) - \auxtau(\tilde{\vel}).
\end{equation}
Thus, the Newton step \eqref{eq:sv-r} is
\begin{equation}\label{eq:Newton-r}
\tilde{\bs \pi}\Delta(\bs v_l) + \frac{2\auxtau(\vel_l)\otimes \bs \pi_l}{\Delta(\bs v_l)}\auxtau(\tilde{\vel}) - \auxtau(\tilde{\vel}) = -\bs \pi_l \Delta(\bs v_l) + \auxtau(\bs v_l).
\end{equation}
Computing the remaining derivative terms in \eqref{eq:R'} is straightforward. Namely,  
$ B'(\vel, \bs\pi)(\tilde{\vel}, \tilde{\bs \pi}, \bs \phi) = \left(P^{n+1}\tilde{\bs \pi}, \auxtau(\bs\phi)\right)$
and $\bs \tau_\text{ocean}'$ is computed in \eqref{eq:standardNewton2}. 

Next, as in \cite{RudiShihStadler20}, we eliminate \eqref{eq:sv-r} from the system. 
Dividing the Newton step \eqref{eq:Newton-r} by $\Delta(\bs v_l)$ shows that
\begin{equation}\label{eq:tautilde}
\tilde{\bs \pi} = 
-\bs \pi_l 
+\frac{\auxtau(\bs v_l)}{\Delta(\bs v_l)}
+ \frac{\auxtau(\tilde{\vel})}{\Delta(\bs v_l)}
- \frac{2\auxtau(\vel_l)\otimes \bs \pi_l}{\Delta(\bs v_l)^2}\auxtau(\tilde{\vel})
.
\end{equation}
Using this in \eqref{eq:R'}, we find the following linear equation for $\tilde{\vel}$: 
\begin{equation}
\begin{aligned}\label{eq:linearSVNewton}
  (\rho_{\text{ice}}H^{n+1} \tilde{\vel}, \bs \phi) 
  + \delta t \left(\frac{P^{n+1}}{\Delta(\bs v_l)}\auxtau(\tilde{\vel}),\auxtau(\bs\phi) \right)
  - \delta t \left(2P^{n+1}\frac{\auxtau(\vel_l)\otimes \bs \pi_l}{\Delta(\bs v_l)^2}\auxtau(\tilde{\vel}), \auxtau(\bs \phi) \right) 
  - \delta t \bs\tau_\text{ocean}'(\bs v_{l})(\tilde{\vel},\bs \phi) \\
  = F(\bs\phi)
  - (\rho_{\text{ice}}H^{n+1} \vel_l, \bs \phi)
  - \delta t \left(\frac{P^{n+1}}{\Delta(\bs v_l)}\auxtau(\bs v_l),\auxtau(\bs\phi)\right)
  + \delta t \left(\frac{P^{n+1}}{2},\text{tr}(\strainratetensor(\bs\phi))\right)
  + \delta t \bs\tau_\text{ocean}(\vel_l).
\end{aligned}
\end{equation}
Note that the right hand side in the above system coincides with the right hand 
side in the original Newton method \eqref{eq:standardNewton1}. 
The left hand side is similar to the one in \eqref{eq:standardNewton1}, with the difference that
$\auxtau(\bs v)/\Delta(\bs v_l)$ in the second term of \eqref{eq:standardNewton1} is replaced by $\bs
\pi_l$ in \eqref{eq:linearSVNewton}. Having solved \eqref{eq:linearSVNewton} for 
the Newton step $\tilde{\vel}$, we use \eqref{eq:tautilde} to
compute the Newton step for $\bs\pi$, which involves a matrix vector
multiplication and does not require solving a linear system.

On top of the above derivation, analogously as in \cite{RudiShihStadler20}, we enforce that the 4th-order tensor in \eqref{eq:linearSVNewton} is symmetric and that $\sqrt{2}\bs \pi_l$
remains in the unit sphere by replacing
\begin{equation}\label{eq:modification}
\auxtau(\bs v_l)\otimes\bs \pi_l \quad \text{by} \quad \frac{\auxtau(\vel_l)\otimes\bs \pi_l + \bs \pi_l \otimes\auxtau(\vel_l)}{2\max(1,\sqrt{2\bs \pi_l: \bs \pi_l})}.
\end{equation}
The same modifications are applied to the last term in \eqref{eq:tautilde} for consistency.
We note that both modifications vanish upon convergence. First, upon convergence, 
i.e., $\bs\pi$ satisfies \eqref{eq:pi}, $\bs \pi$ is a multiple of $\bs\tau$ and thus symmetric
so the symmetrization has no or very little effect when close to the solution. 
Second, upon convergence, $\sqrt{\bs \pi_l: \bs \pi_l}\le 1/\sqrt{2}$ and thus the scaling has no effect.

\section{Numerical Results}\label{sec:num}
In this section, we compare the convergence behavior of various
Newton-type methods and implementations for two test problems. The
first problem (\Cref{sec:testcase1}) is a challenging single
time step problem for the momentum equation. The second problem
(\Cref{sec:testcase2}) is a benchmark from \cite{Mehlmann2021}, which
includes the evolution of sea-ice concentration $A$ and
sea-ice thickness $H$. Both problems use the physical parameters
summarized in \Cref{tab:params}. In particular,
we assume that there is no Coriolis force and thus $f_c=0$.
Finally, in \Cref{sec:linear}, we
propose and study iterative solvers for the linearized systems arising
in each Newton step.
\subsection{Methods and implementations}\label{sec:methods}
We compare the performance of various Newton-type methods, including
the proposed stress--velocity Newton method (denoted as ``\svnewton{}''), 
a standard Newton method (denoted as ``\stdnewton{}'') ,
a modified Newton method from \cite{MehlmannRichter2016newton} 
implemented in the academic software library \textsc{Gascoigne 3D}
\cite{Gascoigne} (denoted as ``\gascoigne{}''), and a Jacobian-free Newton-Krylov (JFNK) solver
\cite{Lemieux2010,Losch2014} from the \textsc{MITgcm} library
\cite{Adcroft2018} (denoted as ``\mitgcm{}''). For details about these solvers, we refer to the
literature above. Our comparisons with \gascoigne{} and
\mitgcm{} are only in terms of the Newton iterations and thus
implementation and discretizations aspects play a limited role.
 All implementations use the same smooth transition
between the viscous and plastic regime specified in \eqref{eq:Delta}.
In our own implementation, we additionally compare the performance of
various solvers for the linearized systems and report timings and
parallel scalability results in \Cref{sec:linear}.

\begin{table}
    \centering
    \begin{tabular}{lll}
    \toprule
    Parameter & Definition & Value  \\\midrule
    $\rho_{\text{ice}}$ & sea-ice density & $\SI{900}{\kg/\m^3}$\\
    $\rho_{\text{a}}$ & air density& $\SI{1.3}{\kg/\m^3}$\\
    $\rho_{\text{o}}$ & water density& $\SI{1026}{\kg/\m^3}$\\
    $C_{\text{a}}$ & air drag coefficient & $1.2\times 10^{-3}$\\
    $C_{\text{o}}$ & water drag coefficient& $5.5\times 10^{-3}$\\
    $P^*$ & ice strength parameter & $\SI{27.5}{\N/\m^2}$\\
    $C$ & ice concentration parameter& 20\\
    $e$ & ellipse ratio & 2\\
    $f_c$ & Coriolis parameter & 0\\
    \bottomrule
    \end{tabular}
    \caption{Physics parameters used in the momentum equation for both test problems.}
    \label{tab:params}
\end{table}

Our implementations of the standard Newton method and the
stress--velocity Newton methods\footnote{Source codes are available at
\url{https://github.com/MelodyShih/SeaIce_svNewton}.} are based on the
open source finite element library Firedrake
\cite{Dalcin2011,Rathgeber2016,Acosta2011,Chaco95,Mitchell2016} and
its interface with the parallel linear algebra toolkit \textsc{PETSc}
\cite{petsc-user-ref,petsc-efficient}.  We use structured
quadrilateral meshes to discretize the spatial domain $\Omega$.
Unless otherwise specified, we use linear $\mathbb{Q}_1$ elements
to discretize the sea-ice velocity $\vel$.  Finite volumes are used
for the sea-ice concentration $A$ and thickness $H$, i.e.,
$\mathbb{Q}_0^{\text{disc}}$ elements. Our implementation (and the
proposed method) also allows
to use higher-order discretizations, and we show results obtained with
quadratic finite elements in space for the problem discussed in \Cref{sec:testcase2}.
%
We terminate the nonlinear iteration when we obtain
$10^4$ reduction in the $L_2$-norm of the nonlinear residual
\ $\|A(\vel_l,\bs \phi) - F(\bs \phi)\|_2$.  To compute a step length
for the Newton updates $\tilde{\vel}$ (and $\tilde{\bs \pi}$), we use
backtracking with the optimization objective \eqref{eq:energy},
starting from unit step length and halving the step length if the
objective does not decrease. Using the norm of the nonlinear residual
rather than the optimization objective for backtracking results in
similar results.  We either use the parallel direct
sparse solver MUMPS \cite{MUMPS01} to solve the linearized problems
arising in each Newton step, or an iterative parallel Krylov solver,
which we detail in \Cref{sec:linear}.

\subsection{Problem I: Single momentum equation solve}\label{sec:testcase1}
This is a challenging single momentum solve problem due to the severe
nonlinearity of the constitutive relation.
We consider the square domain $\Omega=(0, \SI{512}{\km})^2$. 
The atmospheric and ocean forcing are constant in space and time and are
given by
$\vel_o(\bs x)=\boldvar 0~\SI{}{\m\s^{-1}}$ and $\vel_a(\bs x)= (5,5)~\SI{}{\m\s^{-1}}$, 
and we consider a sea-ice concentration $A$ and the thickness $H$ given by
\begin{align*}
A(\bs x) :=1-0.5e^{-800|r(\bs x)|} -0.4e^{-90 |r_1(\bs x)|} -0.4e^{-90 |r_1(\bs x)+0.7|}\quad\text{and}\quad
H(\bs x):=2 A(\bs x),
\end{align*}
where
\begin{align*}
  r(\bs x) = r(x,y)&=0.04 - \Big(\frac{x}{\SI{1000}{\km}}-0.25\Big)^2-\Big(\frac{y}{\SI{1000}{\km}}-0.25\Big)^2,\\
r_1(\bs x) = r_1(x,y)&=0.1 +\Big(\frac{2 x}{\SI{1000}{\km}}\Big )^2-\Big(\frac{2y}{\SI{1000}{\km}}\Big).
\end{align*}
These sea-ice concentration and thickness field have narrow regions
where they are reduced, as shown in the top left of \Cref{fig:cond}. The
initial velocity is $\vel(0,\bs x)=\boldvar 0~\SI{}{\m\s^{-1}}$ and we assume zero
Dirichlet boundary conditions for the velocity $\vel$.  We use a time
step size of $\delta t = \SI{0.5}{\hour}$ for the first (and only)
time step, and unless otherwise specified, a spatial mesh size of $\Delta x = \Delta y = \SI{1}{\km}$,
and $\Delta_\text{min}=2\times10^{-9}$. The sea-ice velocity solution
$\vel$ is shown in the top right of \Cref{fig:cond}.  Next, we compare
the performance of various solvers.

\begin{figure}
\centering
\includegraphics[align=c,height=4cm]{./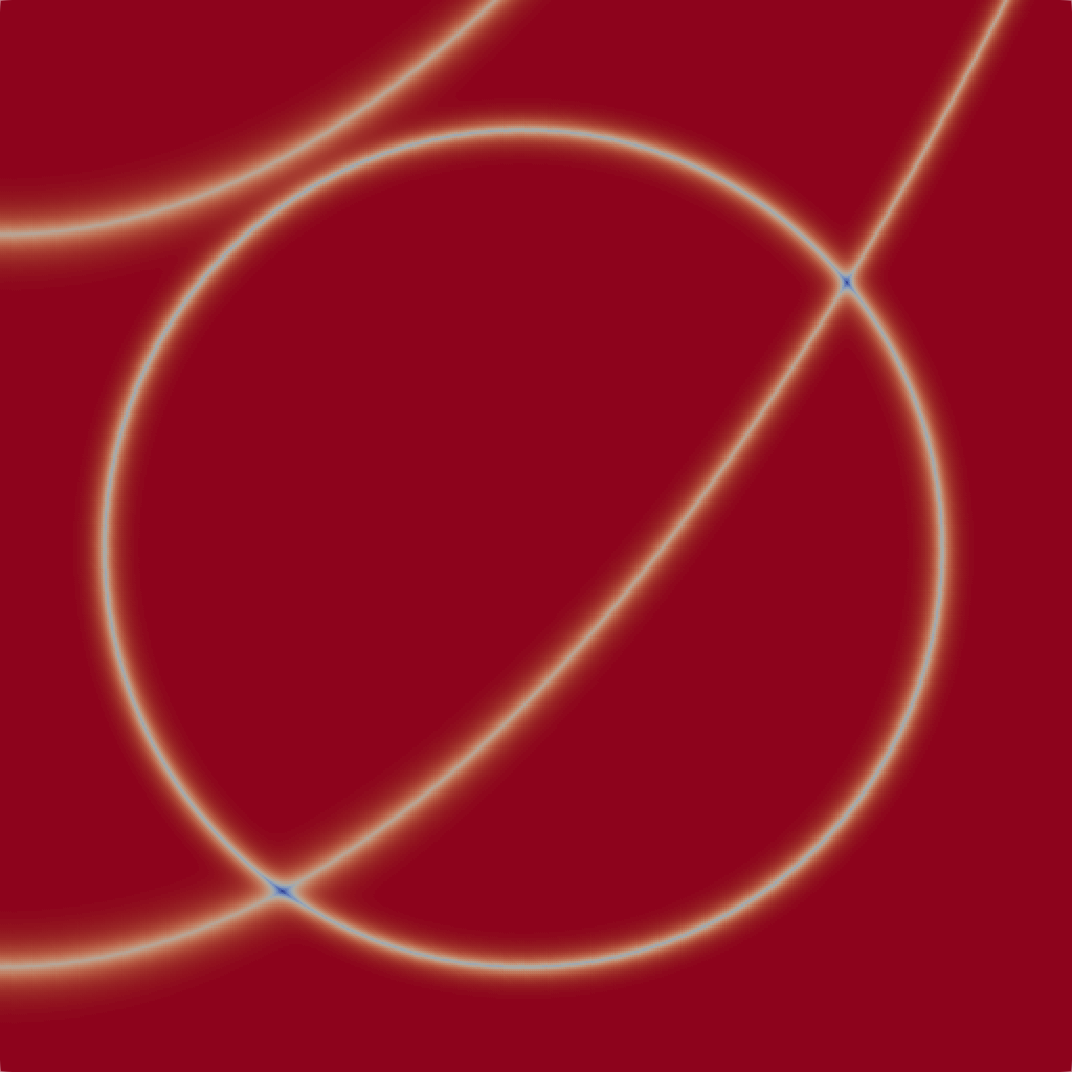}
\includegraphics[align=c,height=4cm]{./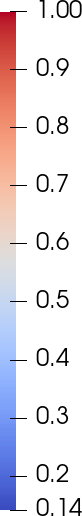}
\hspace{0.3cm}
\includegraphics[align=c,height=4cm]{./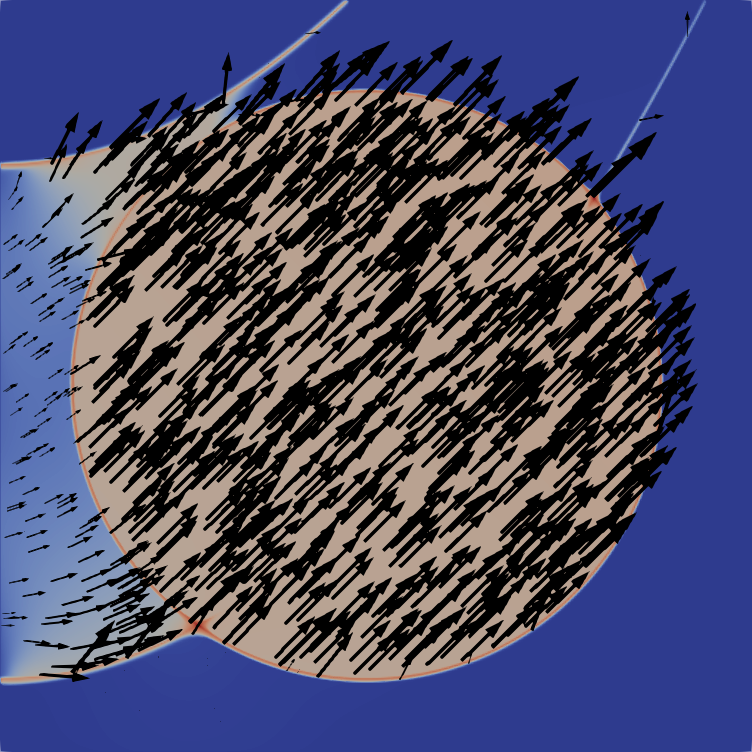}
\includegraphics[align=c,height=4cm]{./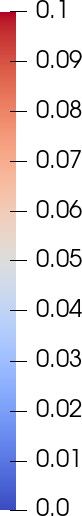}\\[3ex]
\centering \begin{tikzpicture}[baseline=(current axis.outer east),scale=0.9]
\centering
\begin{semilogyaxis}[
    height=6cm,
    width=10.5cm,
    title={},
    xlabel={\# of iteration},
    ylabel={nonlin.\ residual norm},
    xmin=0, xmax=300,
    ymax=5e0,
    ymin=1e-4,
    ytick={1e0, 1e-2, 1e-4},
    grid=major,
    /tikz/every even column/.append style={column sep=0.8cm},
    legend style={at={(0.5, 1.27)},
                  draw=none,
                  anchor=north},
    legend columns=2,
    legend cell align=left]
    \addplot[blue,mark=*, mark size=1pt] table[skip first n=1] {\data/newton_res_norm.out};
	\addlegendentry{\stdnewton{}}
    \addplot[orange,mark=*, mark size=1pt] table[skip first n=1] {\data/stressvel_res_norm.out};
	\addlegendentry{\svnewton{}}
    \addplot[green!80!black,mark=*, mark size=1pt] table [col sep=comma, y index=1] {\data/Newton_2km_stationaer.csv};
    \addlegendentry{\gascoigne}
    \addplot[violet,mark=*, mark size=1pt] table [col sep=comma, y index=1] {\data/test2_iterations_2km.csv};
    \addlegendentry{\mitgcm}
\end{semilogyaxis}
\end{tikzpicture}
\scalebox{0.9}{
\begin{tabular}{cc}
\toprule
\begin{tabular}{c}
Mesh size \\
 (km)
\end{tabular}
 & 
\begin{tabular}{c}
\svnewton{} \\
(\# of iter.)
\end{tabular}\\\midrule
1    & 30 \\\midrule
0.5  & 37 \\\midrule
0.25 & 45 \\
\bottomrule
\end{tabular}
}
\caption{Setup and convergence for Problem I. The top left image shows
  the concentration field $A$, which has narrow regions with reduced
  concentration. The ice height $H$ is a constant multiple of $A$.
  The top right figure shows the computed sea-ice velocity $\vel$.
  Arrows show the velocity direction and the background color depicts
  the velocity magnitude, which has sharp gradients as a result of the
  viscous-plastic constitutive relation.  Shown at the bottom left is the
  $L_2$-norm of the nonlinear residual in each iteration for various
  methods and implementations with mesh size $\Delta x = \SI{1}{km}$. 
  On the bottom right, we show the number of iterations of \svnewton{} for two finer mesh sizes $\Delta x = \SI{0.5}{km}, \SI{0.25}{km}$.}
\label{fig:cond}
\end{figure}

The bottom figure in \Cref{fig:cond} compares
the nonlinear convergence of various solvers.  We observe a significant
reduction in the number of Newton iterations of the proposed method
\svnewton{} compared to the standard Newton method \stdnewton{}. The
\stdnewton{}  makes little progress within 200 iterations,
while the \svnewton{} is able to reduce the
residual by a factor of $10^4$ in 30 iterations. In addition, the
\svnewton{} outperforms the other two methods: in
terms of the number of iterations, it converges $2.6$ times faster
than the \gascoigne{}, and
about $7$ times faster than the \mitgcm{}. 
As will be shown in the next section, the difference in Newton iterations becomes even larger with higher grid resolution.
In the table in \Cref{fig:cond}, we also show that the number of Newton iterations only increases moderately upon mesh refinement, increasing from 30 for $\Delta x=\SI{1}{\km}$ to 37 for $\Delta x=\SI{0.5}{\km}$ and to 45 for $\Delta x=\SI{0.25}{\km}$. 

\subsection{Problem II: Time-dependent sea-ice benchmark from \cite{Mehlmann2021}}\label{sec:testcase2}
In this test problem, we solve the full time
dependent equations \eqref{eq:model} for the benchmark problem from
\cite{Mehlmann2021}.  In particular, we use the following parameters
and forcing. We consider a square domain $\Omega = (0,
\SI{512}{\km})^2$ and the atmospheric velocity $\vel_a$ and the ocean
flow velocity $\vel_o$ in \eqref{eq:force} are
\begin{equation*}
\begin{aligned}
    &\vel_o(t,x,y) = \SI{0.01}{\m\s^{-1}} 
    \begin{pmatrix}
        -1 + 2y/ (\SI{512}{\km})\\
        \phantom{-}1 - 2x/ (\SI{512}{\km})
    \end{pmatrix},\\
    &\vel_{\text{a}}(t,x,y) = 
    \omega(x,y)\bar{\vel}_{\text{a}}^{\max}
    \begin{pmatrix}
        \phantom{-}\cos(\alpha) &\sin(\alpha) \\
        -\sin(\alpha) &\cos(\alpha)
    \end{pmatrix}
    \begin{pmatrix}
        x - m_x(t) \\
        y - m_y(t)
    \end{pmatrix},
\end{aligned}
\end{equation*}
where
\begin{equation*}
\begin{aligned}
    \bar{\vel}_{\text{a}}^{\max} = \bar{\vel}_{\text{a}}^{\max}(t) &= 15 \text{ ms}^{-1} 
    \begin{cases}
    -\tanh ((\phantom{1}4-t)(\phantom{-}4+t)/2)  & t\in [0,4]~\SI{}{\day}\\
    \phantom{-}\tanh ((12-t)(-4+t)/2) & t\in [4,8]~\SI{}{\day}\\
    \end{cases},\\
    \alpha = \alpha(t) &= \begin{cases}
    72^\circ\quad t\in [0,4]~\SI{}{\day}\\
    81^\circ\quad t\in [4,8]~\SI{}{\day}\\
    \end{cases},\\
    m_x(t) = m_y(t) &= 
    \begin{cases}
    256\phantom{.6}~\SI{}{\km} + \SI{51.2}{\km/\day} \cdot t&  t\in [0,4] ~\SI{}{\day}\\
    665.6~\SI{}{\km} - \SI{51.2}{\km/\day} \cdot t & t\in [4,8]~\SI{}{\day}\\
    \end{cases}\\
    \omega(x,y)=\omega(t,x,y) &= \frac{1}{50}\exp{\left(-\frac{\sqrt{(x - m_x(t))^2 + (y - m_y(t))^2}}{\SI{100}{\km}}\right)}.
\end{aligned}
\end{equation*}
As initial conditions, we use
\begin{equation*}
    \vel(0, \bs x) = \boldvar 0~\SI{}{\m\s^{-1}},
    \hspace{0.5em} A(0,\bs x) = 1,\hspace{0.5em}
    H(0,x,y) = \SI{0.3}{\m} + \SI{0.005}{\m} \left(\sin\left( \frac{60 x}{\SI{1000}{\km}} \right) + \sin\left( \frac{30y}{\SI{1000}{\km}}\right)\right).
\end{equation*}
We use the time step size $\delta t = \SI{0.5}{\hour}$ for all our
simulations and unless otherwise specified,
$\Delta_\text{min}=2\times10^{-9}$. The resulting velocity field,
sea-ice concentration and shear deformation are shown in
\Cref{fig:vel,fig:iceconcen,fig:shear}. 
Note that the simulations yield more
small-scale solution features on finer meshes.
The results obtained with the \gascoigne{}
implementation largely coincides with our solution on the
corresponding meshes. To illustrate that the method does not rely on a
specific discretization, we also show a result with quadratic elements
for sea-ice velocity $\vel$ on a \SI{2}{\km} mesh. This discretization
has the same number of velocity unknowns as the linear element
discretization for $\vel$ on the \SI{1}{\km} mesh, and the solutions
are similar.
Higher-order spatial discretization are unlikely to result in
substantially more accurate solutions as the fields typically have
large variations of gradients due to the constitutive relation.
The \mitgcm{} solutions are slightly different.
Similar differences between solutions obtained
with different discretization have also been observed in the benchmark
study \cite{Mehlmann2021}.

\begin{figure}
\centering
\begin{tabular}{clccccl}
	$H$ &&& \multicolumn{3}{c}{$\vel$}\\
	\includegraphics[align=c,height=3cm]{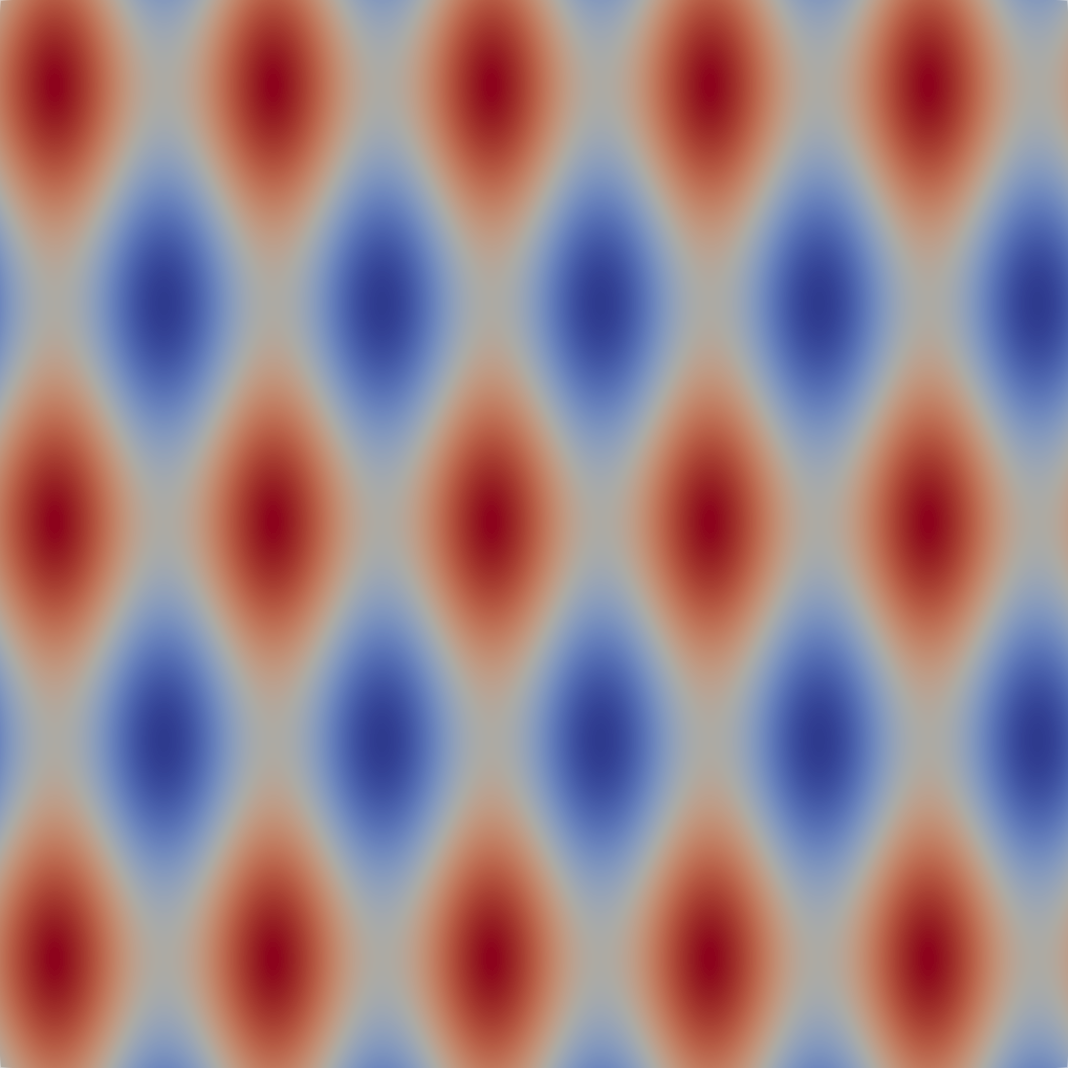} 
	&\multicolumn{1}{l}{\hspace{-0.2cm}\includegraphics[align=c,height=3cm]{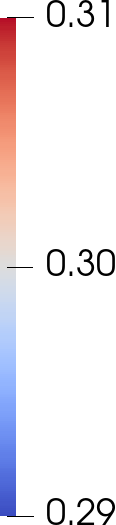}}
	&
    &\includegraphics[align=c,height=3cm]{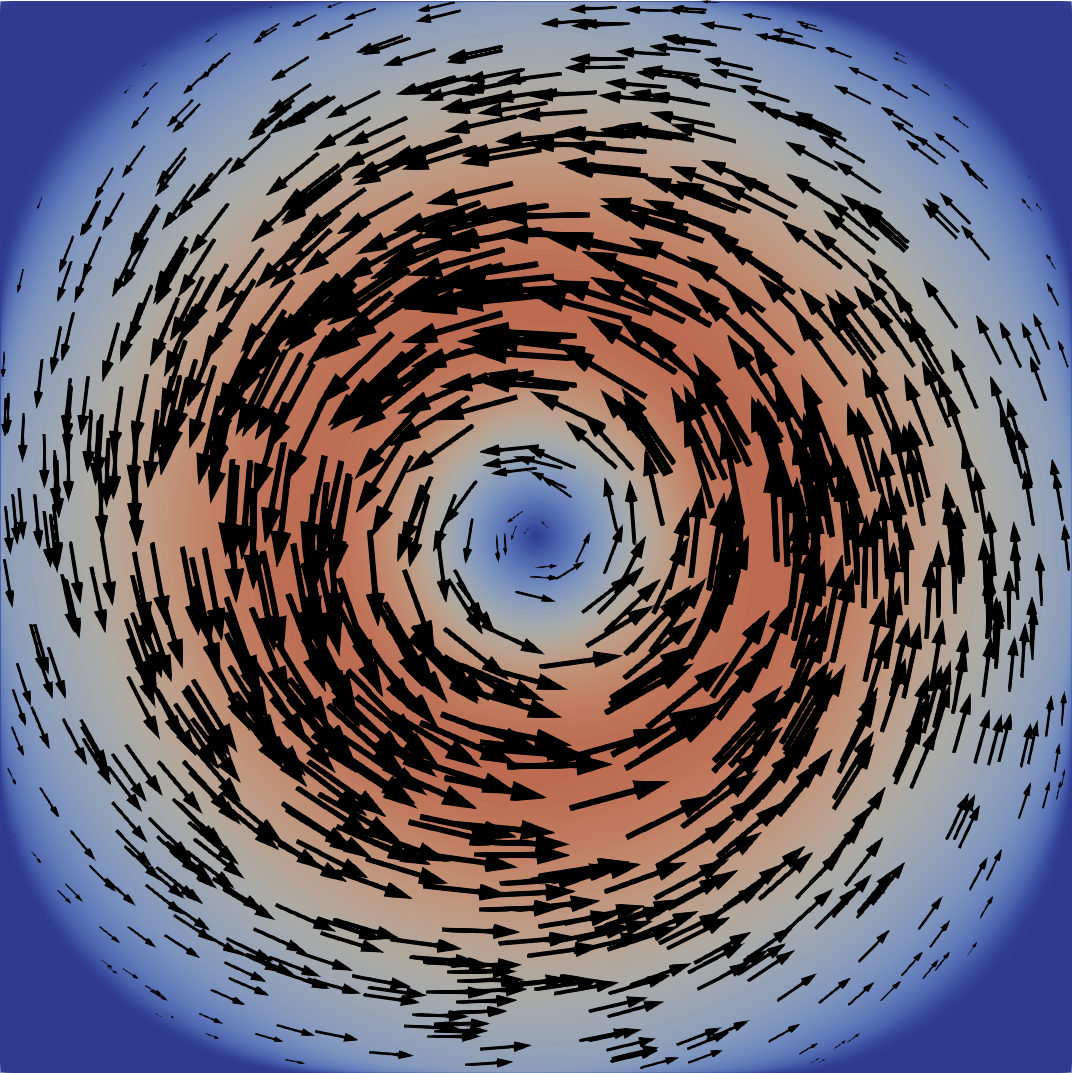} 
    &\includegraphics[align=c,height=3cm]{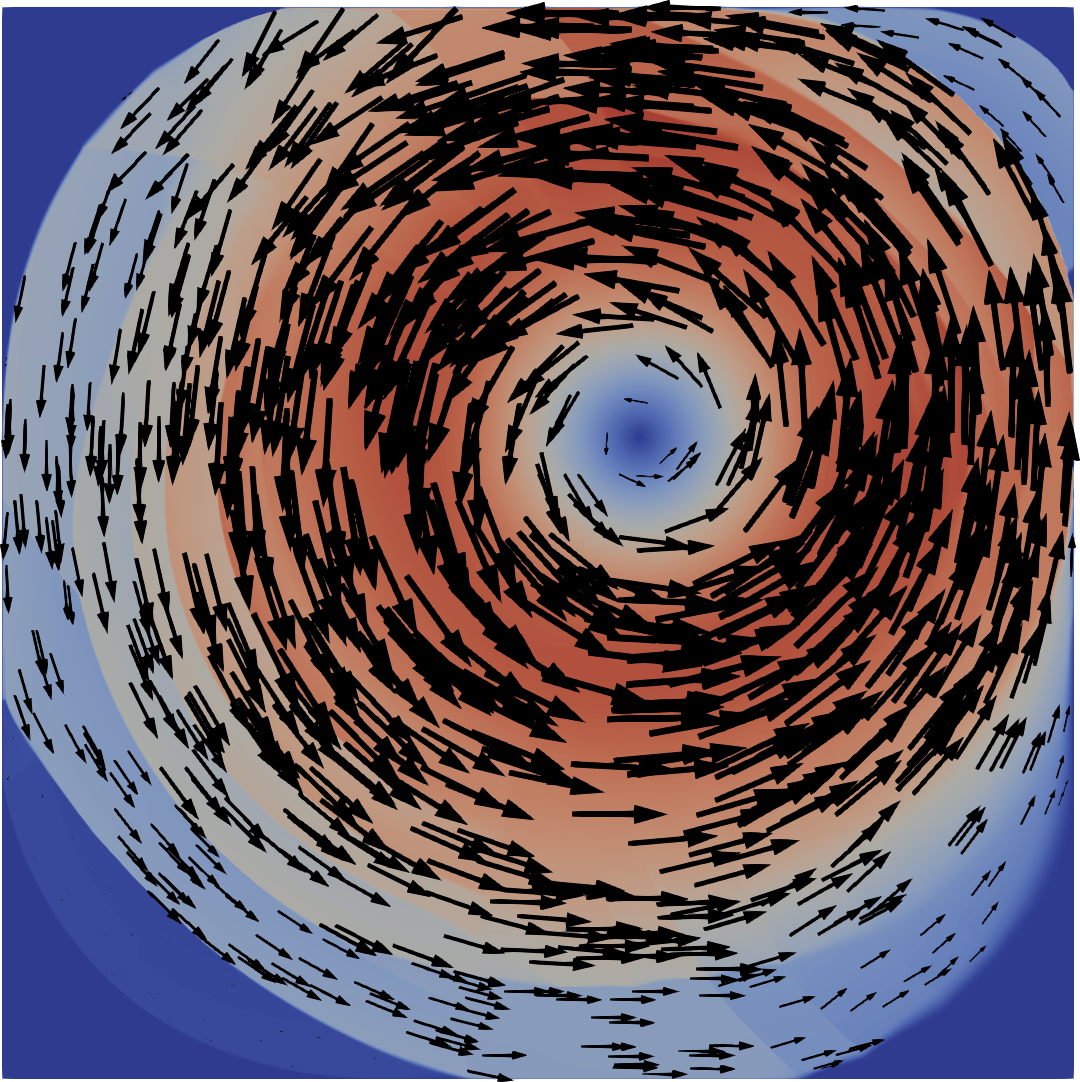}
    &\includegraphics[align=c,height=3cm]{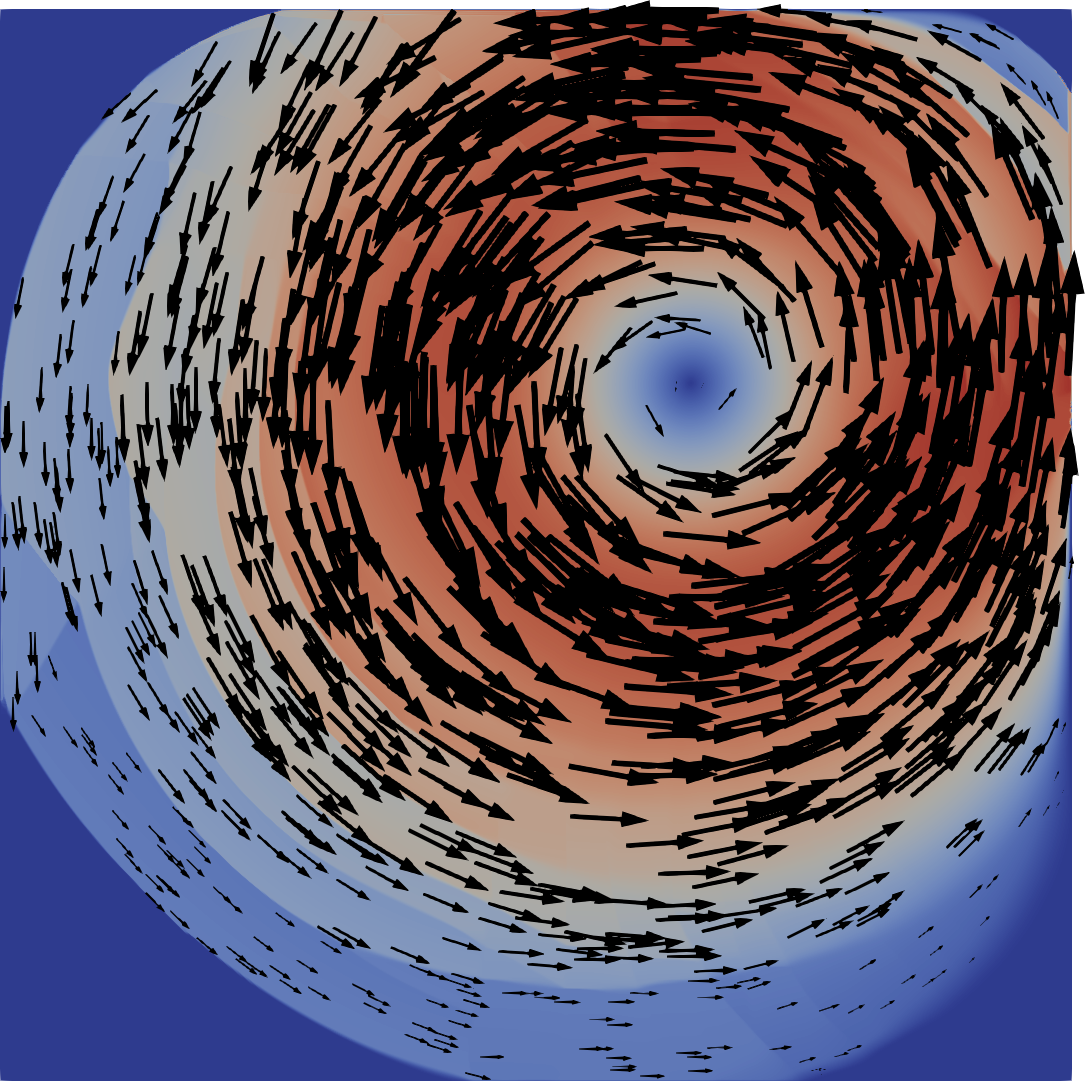}
    &\multicolumn{1}{l}{\hspace{-0.2cm}\includegraphics[align=c,height=3cm]{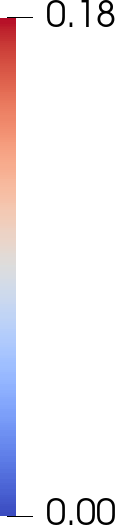}}\\[10ex]
	$T = 0$&&&$T = \SI{30}{\min}$ & $T = \SI{1}{\day}$. & $T = \SI{2}{\day}$.&
\end{tabular}
	\caption{Initial ice height $H$ and sea-ice velocity $\vel$ ($\SI{}{\m\s^{-1}}$) at different times
	$T$ for Problem II.  Arrows in the right three plots show the direction of the velocity field
  and the colors depict its magnitude. Note the discontinuities in the
  sea-ice velocity magnitude, which are due to the nonlinear rheology.
  Results are from a run on a quadrilateral mesh with mesh size
  $\Delta x = \Delta y = \SI{2}{\km}$ using \svnewton{}.}
\label{fig:vel}
\end{figure}

\begin{figure}
   \centering
	\resizebox{\textwidth}{!}{
   \begin{tabular}{p{0.2cm}cccp{0.2cm}ccl}
	&$\Delta x = \SI{4}{km}$ & \SI{2}{km} &\SI{1}{km} &&\SI{4}{km} & \SI{2}{km}\\
	\begin{tabular}{r}\rotatebox{90}{\small \gascoigne{}}\end{tabular}
	&\includegraphics[align=c,height=3.0cm]{\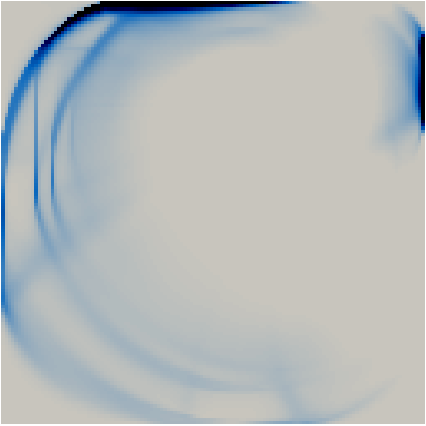}
    &\includegraphics[align=c,height=3.0cm]{\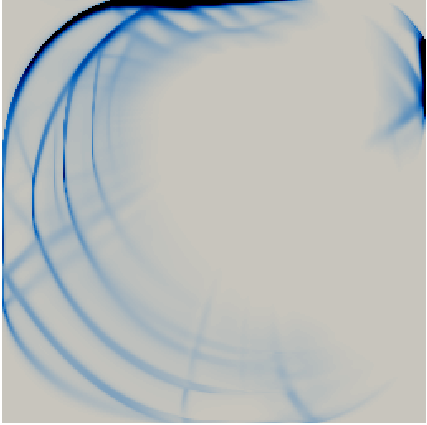}
    &\includegraphics[align=c,height=3.0cm]{\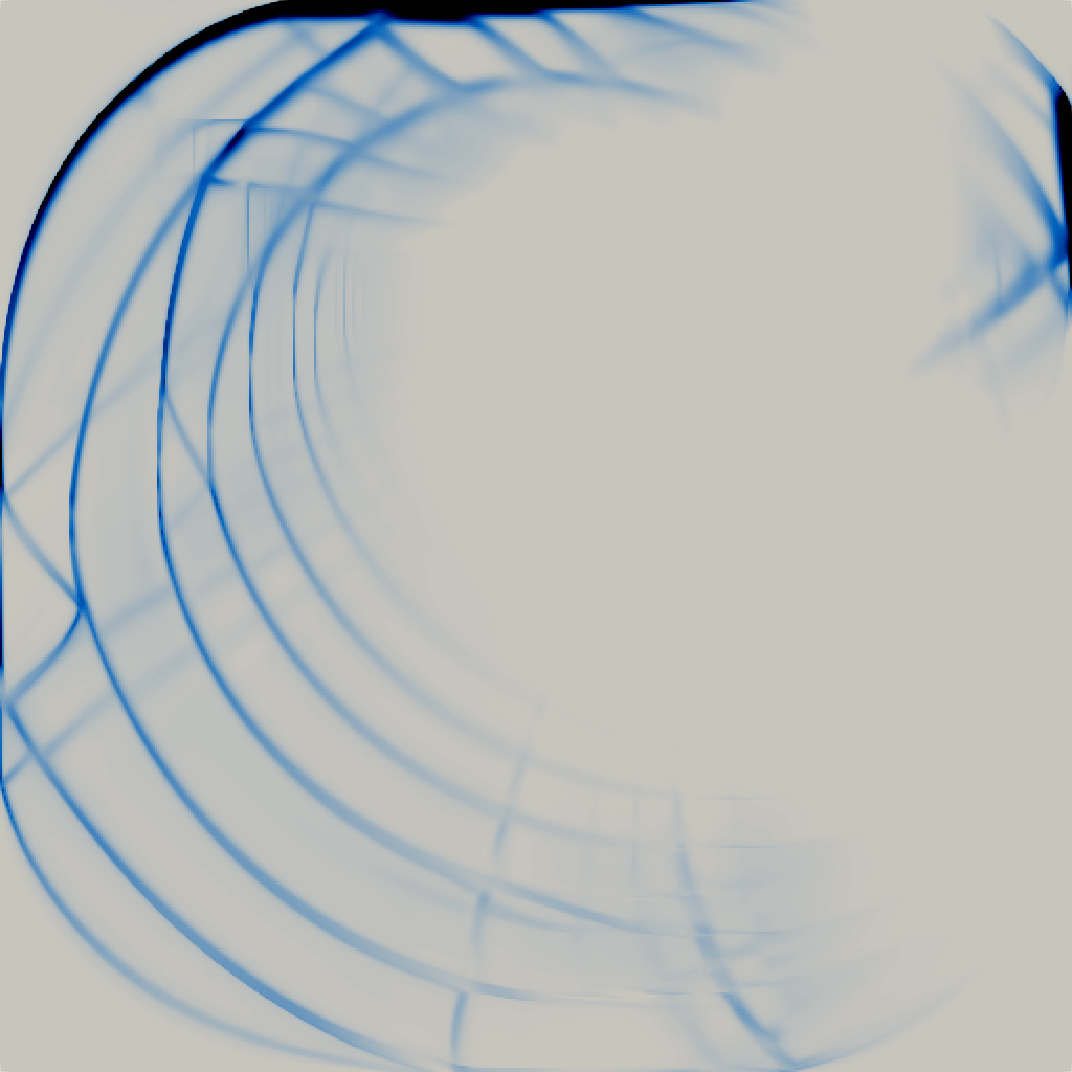}
    &\begin{tabular}{r}\rotatebox{90}{\small \mitgcm{}}\end{tabular}
    &\includegraphics[align=c,height=3.0cm]{\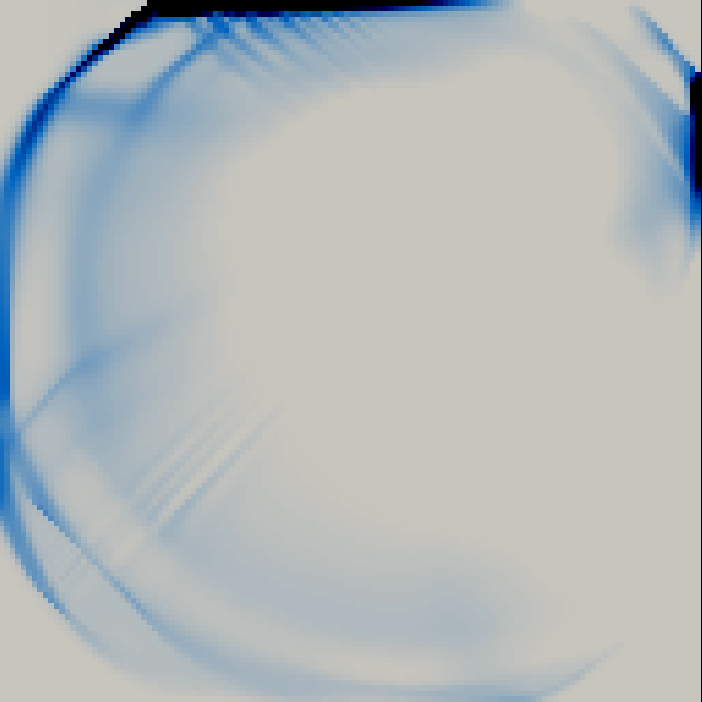}
    &\includegraphics[align=c,height=3.0cm]{\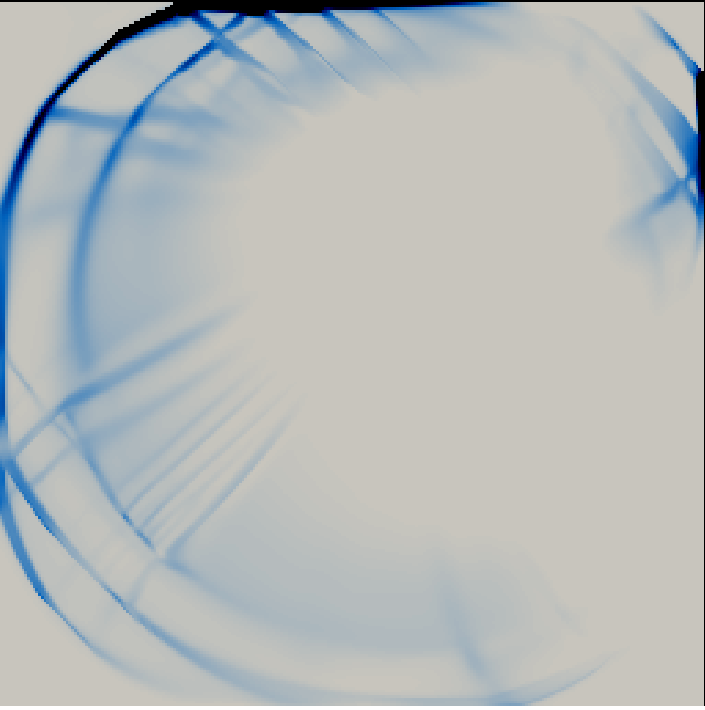}
	&\multirow{8}{*}{\hspace{-0.2cm}\includegraphics[align=c,height=3.0cm]{\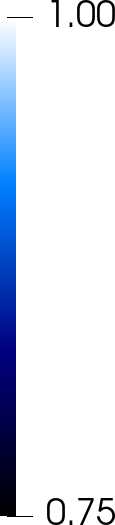}}
	\\[10ex]
	&\SI{4}{km} &\SI{2}{km} &\SI{1}{km} && \SI{2}{km} + $\mathbb{Q}_2$ for $\vel$&\SI{0.5}{km} \\
    \begin{tabular}{r}\rotatebox{90}{\small \svnewton{}}\end{tabular}
    &\includegraphics[align=c,height=3.0cm]{\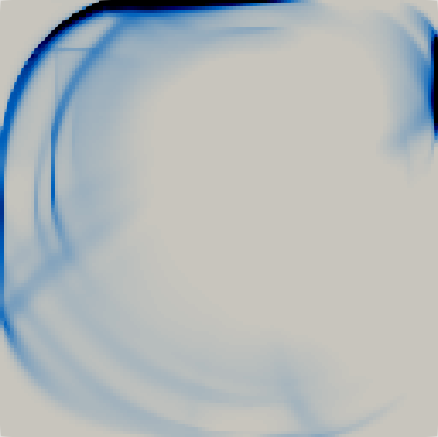}
    &\includegraphics[align=c,height=3.0cm]{\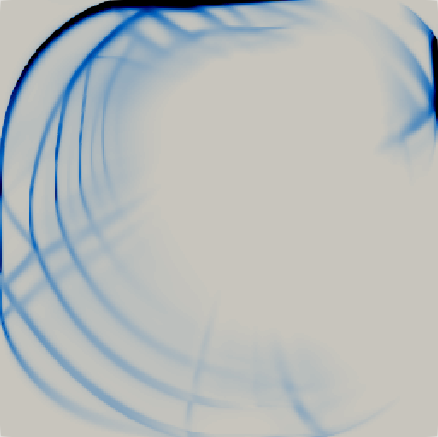}
	&\includegraphics[align=c,height=3.0cm]{\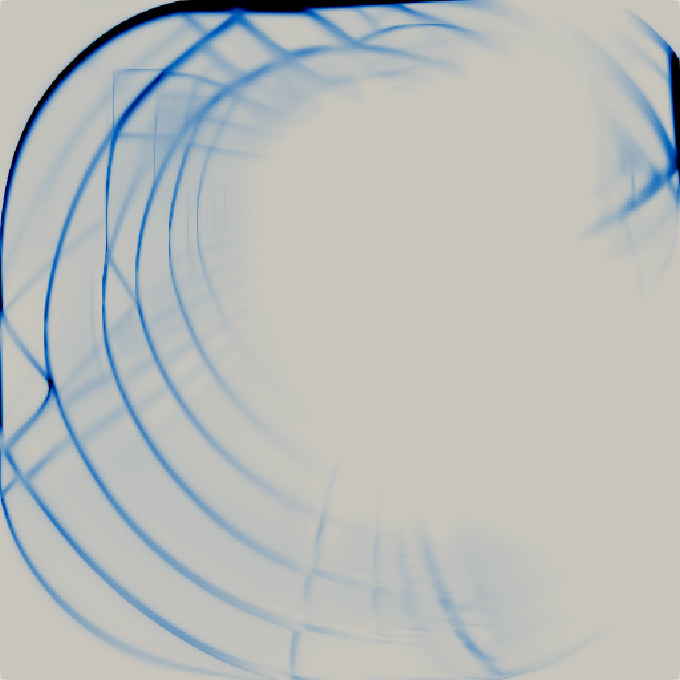}
	&&\includegraphics[align=c,height=3.0cm]{\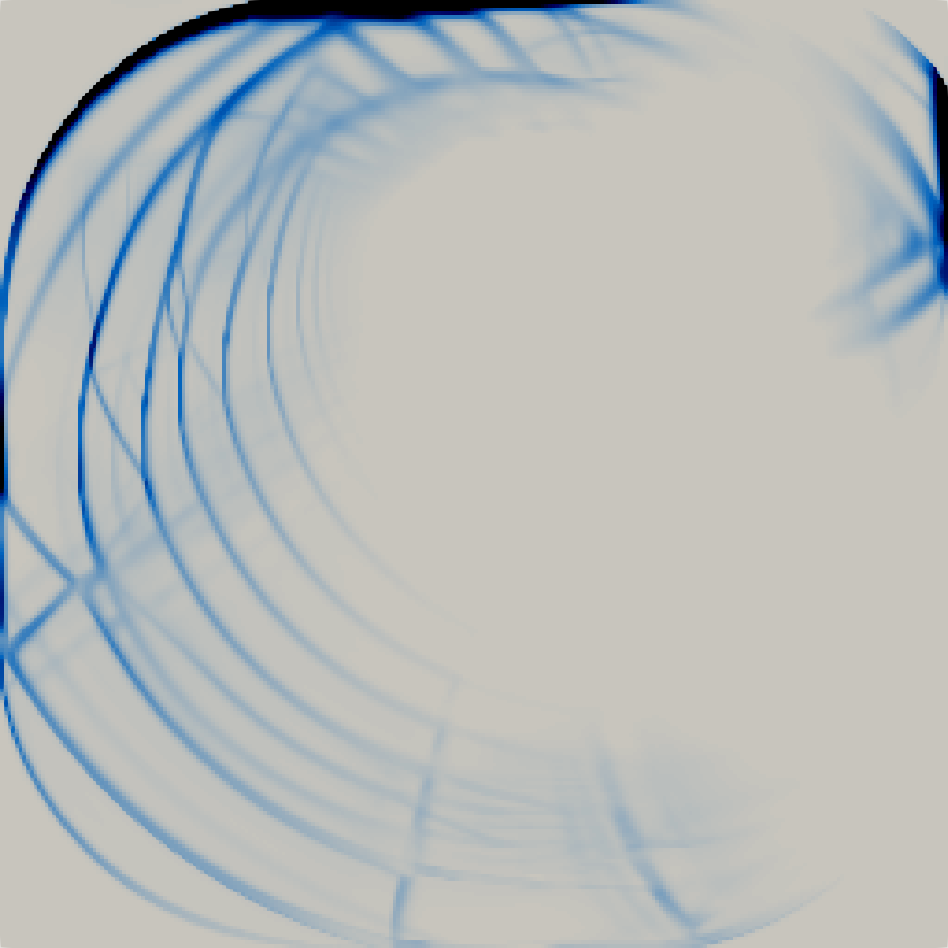}
	&\includegraphics[align=c,height=3.0cm]{\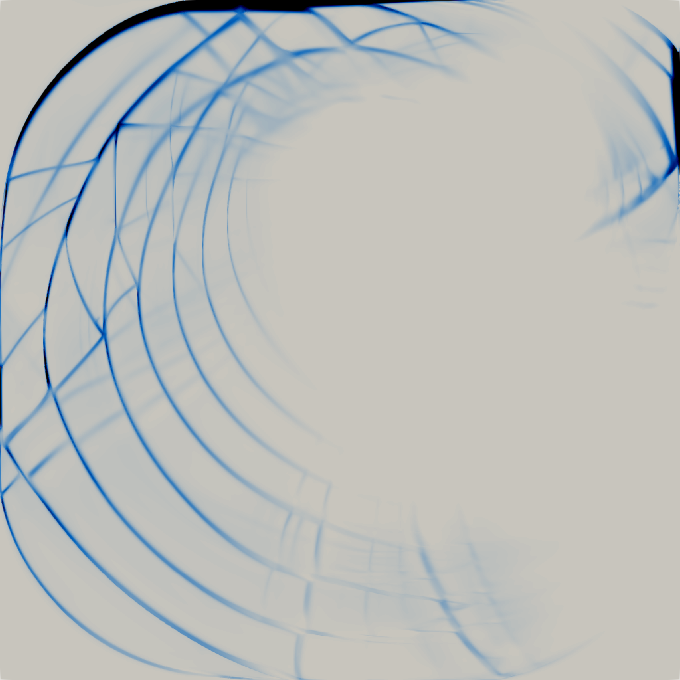}
   \end{tabular}
	}
   \caption{Sea ice concentration $A$ at 2 days for Problem II
     computed with different mesh resolutions and methods. All
     simuations use a \SI{0.5}{\hour} time step and thus $\SI{48}{\hour}/\SI{0.5}{\hour} = 96$ time steps. For the run with $\mathbb{Q}_2$ for $\vel$, we use $\mathbb{Q}_1^{\text{disc}}$ for $A$ and $H$.}
  \label{fig:iceconcen}
\end{figure}

\begin{figure}
   \centering
	\resizebox{\textwidth}{!}{
   \begin{tabular}{p{0.2cm}cccp{0.2cm}ccl}
	&$\Delta x = \SI{4}{km}$ & \SI{2}{km} &\SI{1}{km} &&\SI{4}{km} & \SI{2}{km}\\
	\begin{tabular}{r}\rotatebox{90}{\small \gascoigne{}}\end{tabular}
	&\includegraphics[align=c,height=3.0cm]{\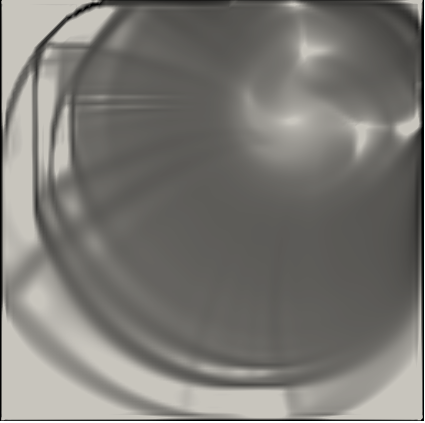}
    &\includegraphics[align=c,height=3.0cm]{\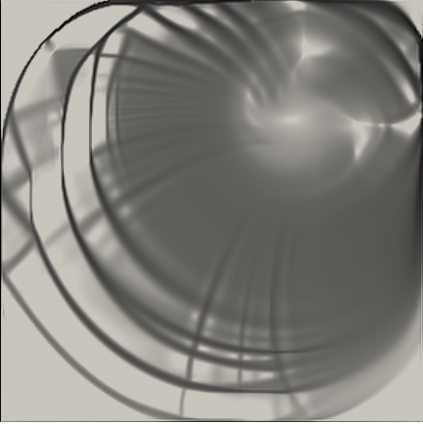}
    &\includegraphics[align=c,height=3.0cm]{\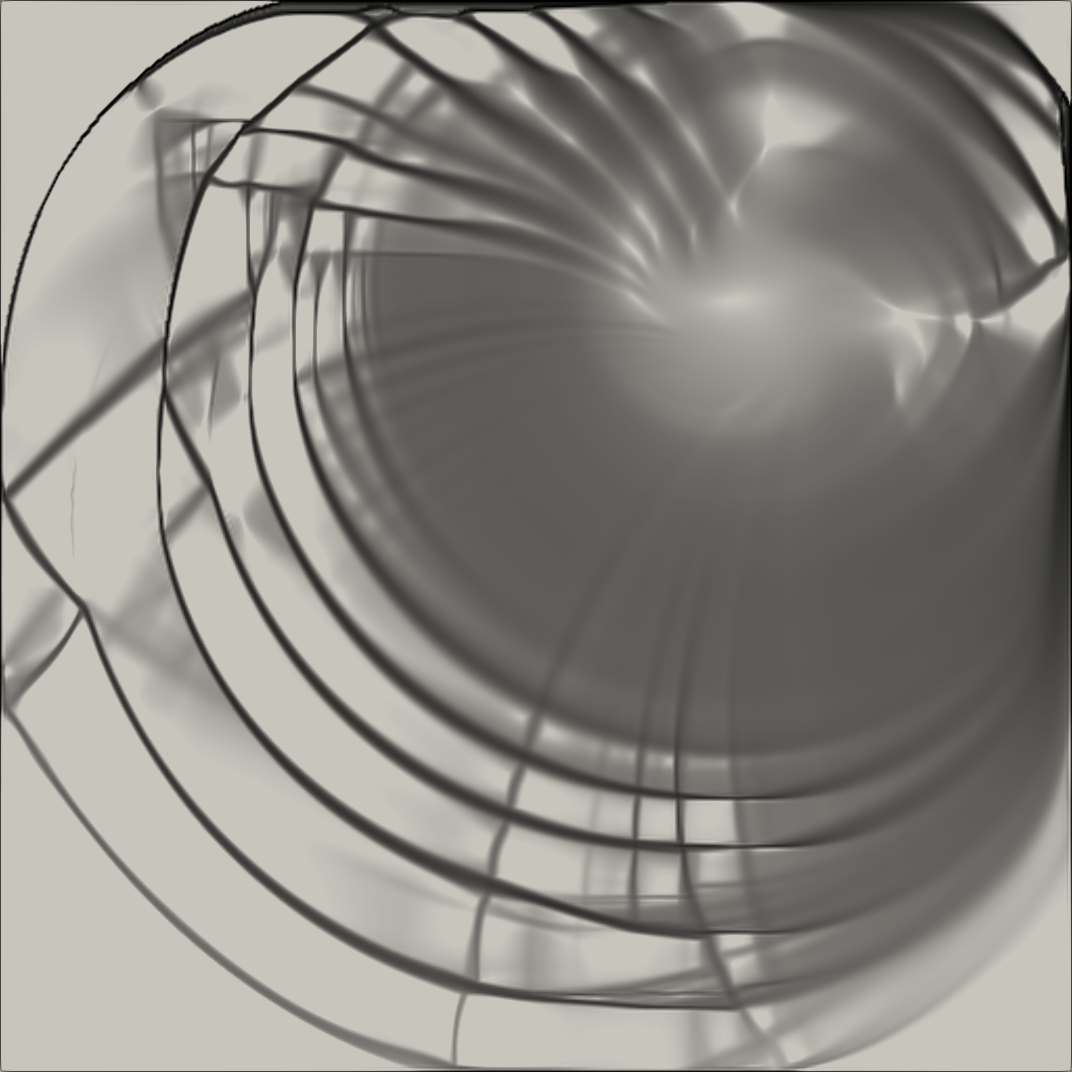}
    &\begin{tabular}{r}\rotatebox{90}{\small \mitgcm{}}\end{tabular}
    &\includegraphics[align=c,height=3.0cm]{\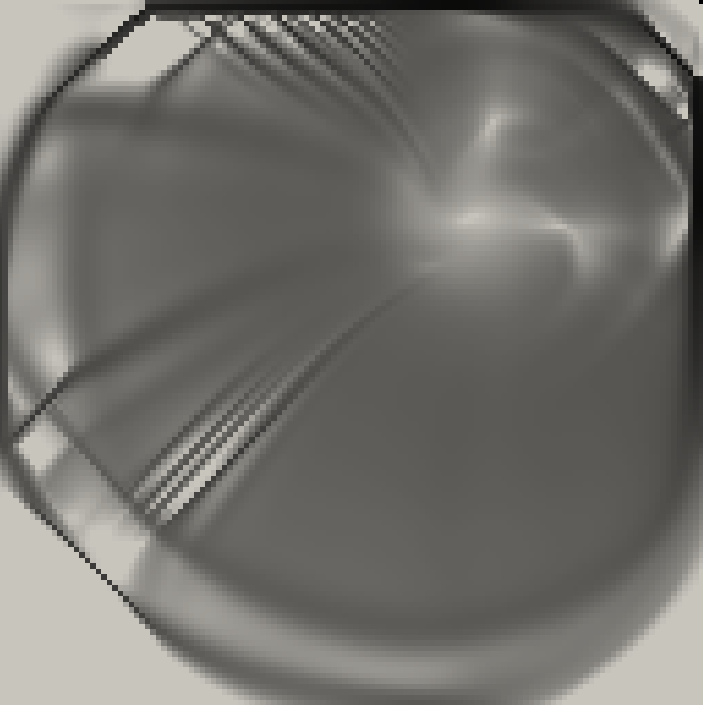}
    &\includegraphics[align=c,height=3.0cm]{\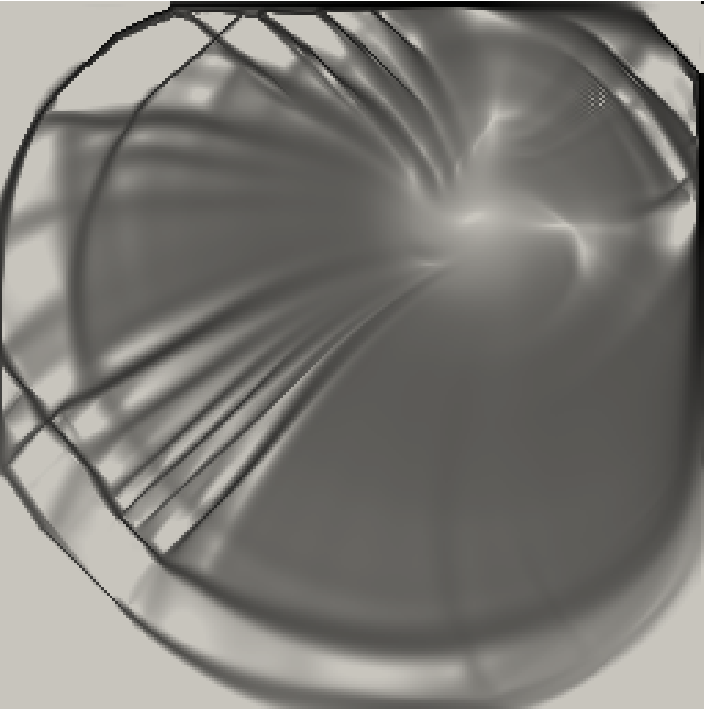}
	&\multirow{8}{*}{\hspace{-0.2cm}
	   \begin{tikzpicture}
		   \node {\includegraphics[align=c,height=3.0cm]{\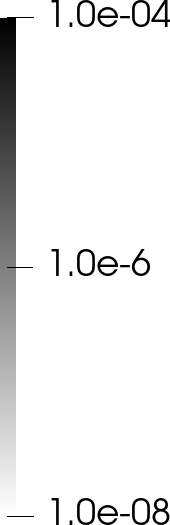}};
		   \draw [fill=white,draw=none] (-0.3,1.2)  rectangle node[pos=.5]{$10^{-4}$} (0.5,1.7)  ;
		   \draw [fill=white,draw=none] (-0.3,-0.2) rectangle node[pos=.5]{$10^{-6}$} (0.5,0.3)  ;
		   \draw [fill=white,draw=none] (-0.3,-1.6) rectangle node[pos=.5]{$10^{-8}$} (0.5,-1.1) ;
	   \end{tikzpicture}}
	\\[10ex]
	   &\SI{4}{km} &\SI{2}{km} &\SI{1}{km} && \SI{2}{km} + $\mathbb{Q}_2$ for $\vel$ &\SI{0.5}{km} \\
    \begin{tabular}{r}\rotatebox{90}{\small \svnewton{}}\end{tabular}
    &\includegraphics[align=c,height=3.0cm]{\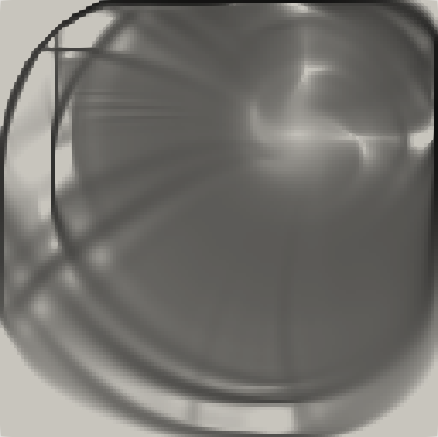}
    &\includegraphics[align=c,height=3.0cm]{\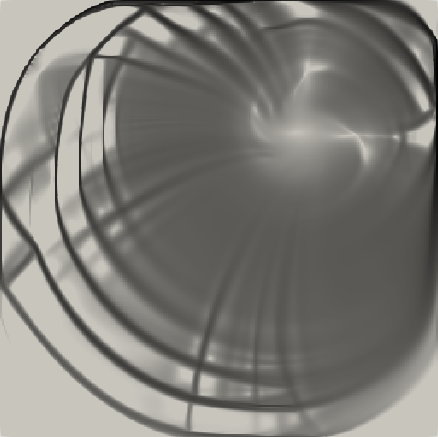}
	&\includegraphics[align=c,height=3.0cm]{\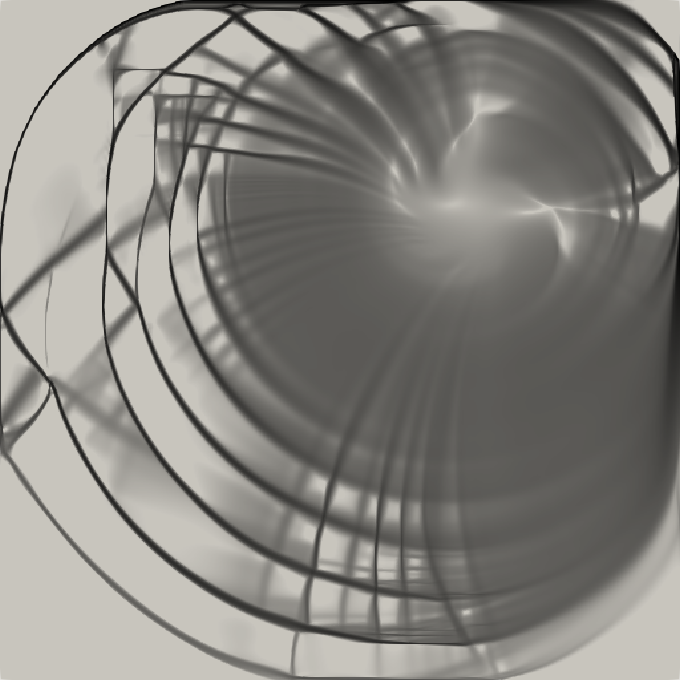}
	&&\includegraphics[align=c,height=3.0cm]{\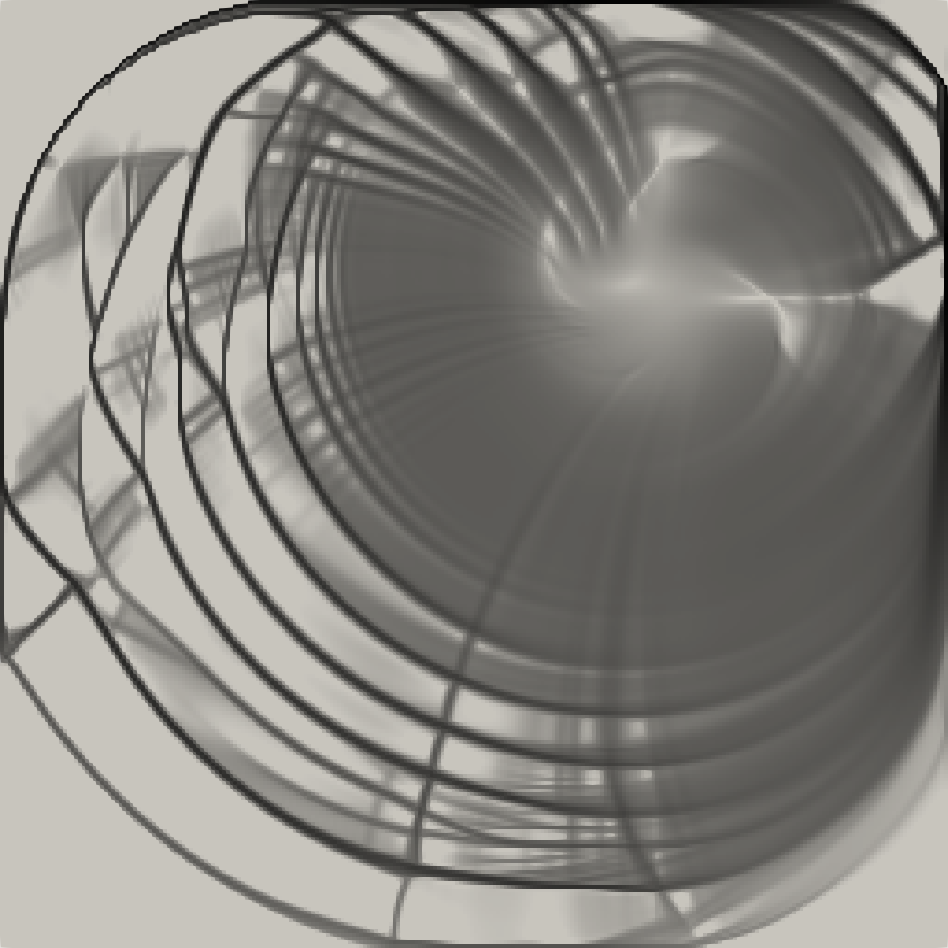}
	&\includegraphics[align=c,height=3.0cm]{\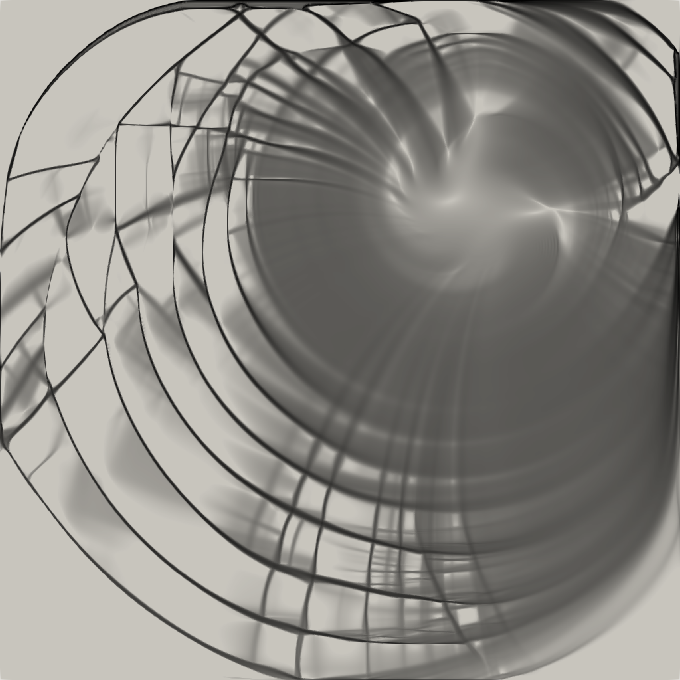}
   \end{tabular}
	}
   \caption{Shear deformation $2\sqrt{-\det \strainratetensor'}$ 
   after 2 days ($96$ time steps) for Problem II.}
  \label{fig:shear}
\end{figure}


\paragraph{Nonlinear solver convergence}
\Cref{fig:conv} and \Cref{table:meaniter} compare the performance of
different methods for the benchmark problem using a \SI{4}{\km} and a
\SI{2}{\km} mesh discretization. In particular, in \Cref{fig:conv}, we
show the number of Newton iterations required for each time step, and
in \Cref{table:meaniter}, we show the number of average Newton iterations over all
time steps. In this table, we also include a run with the smaller
$\Delta_{\min}=2\times 10^{-10}$, the constant that prevents
division by zero in \eqref{eq:sigma}, for which only the
\svnewton{} converges.
These results highlight the robustness of the \svnewton{}, 
whose average  number of iterations is more robust to mesh
refinement and the parameter $\Delta_{\text{min}}$.
Convergence failures, i.e., no reduction of the nonlinear residual by
4 orders of magnitude after 200 iterations, occur for \stdnewton{}
when we refine the mesh and/or use a smaller
$\Delta_{\text{min}}$. A similar behavior is observed for 
\mitgcm{} for the smaller mesh discretization.  The
\gascoigne{} performs comparably with the
\svnewton{}, except between day 4 and 5, which is when the
atmospheric velocity $\vel_a$ changes its direction, making the
problem particularly difficult to solve.  It however fails when
$\Delta_\text{min}=2\times 10^{-10}$.
\begin{table}
	\centering
	\caption{Average number of Newton iterations over an 4-day
          simulation of Problem II. The number of time steps, i.e.,
          the total number of the nonlinear solves for all runs is
          384. A dash indicates that the run failed, i.e., for at
          least one time step, the nonlinear solver did not converge
          after 200 iterations.}
	\label{table:meaniter}
	\begin{tabular}{r|cccc|c}
		\toprule
		$\Delta_{\min}$ & \multicolumn{4}{c|}{$2\times 10^{-9}$} & $2\times 10^{-10}$\\\midrule
		Mesh size& \multicolumn{1}{c}{\SI{4}{\km}} & 
		\multicolumn{1}{c}{\SI{2}{\km}}&\multicolumn{1}{c}{\SI{1}{\km}}&\multicolumn{1}{c|}{\SI{0.5}{\km}}&\multicolumn{1}{c}{\SI{4}{\km}}\\\midrule
		\stdnewton & $52.75$ & $78.77$ & $162.21$ & --- & --- \\
		
		\svnewton{} & $6.66$ & $11.32$ & $15.91$ & $20.20$ &$10.15$\\
		\gascoigne{} & $10.35$ & $15.87$ & $26.51$ & {\footnotesize N/A}& --- \\
		\mitgcm{} & $21.34$ &---& {\footnotesize N/A}&{\footnotesize N/A}& {\footnotesize N/A}\\
		\bottomrule
	\end{tabular}
\end{table}
\begin{figure}

\centering
\begin{tikzpicture}[scale=0.9]
\centering
\begin{axis}[
  title={},
  grid = major,
  xmin = -0.2,
  xmax = 8.2,
  ymax = 210,
  width = 15cm,
  height = 6cm,
  ylabel=\# nonlinear iter.,
  ylabel near ticks,
  xmajorticks=false]
  \addplot[dashed,black,no markers] coordinates {(-0.2,200) (8.2,200)};
  \addplot[blue,mark=*, mark size=1pt] table[col sep=comma, x index=0, y index=1] {\data/4km.csv};
  \addplot[violet,mark=*, mark size=1pt] table[col sep=space, x expr=\thisrowno{0}*2.08333E-02, y index=1] {\data/newton_iterations_4km.txt};
  \addplot[green!80!black,mark=*, mark size=1pt] table[col sep=comma, x index=0, y index=1] {\data/Newton_DDD_4km_new.csv};
  \addplot[orange,mark=*, mark size=1pt] table[col sep=comma, x index=0, y index=2] {\data/4km.csv};
  \node[rectangle, fill=gray!50!white] (A) at (0, 190)[right]{$\Delta x = 4$ km; $\Delta_{\text{min}}=2\times 10^{-9}$};
  \node[rectangle] at (59, 180)[right]{\small{\textcolor{blue}{\textbf{Failed (5.73 day)}}}};
\end{axis}
\begin{axis}[
   title={},
   grid = major,
   xmin = -0.2,
   xmax = 8.2,
   ymax = 210,
   xlabel=day,
   width = 15cm,
   height = 6cm,
   ylabel=\# nonlinear iter.,
   ylabel near ticks,
   legend style={at={(0.5, 2.3)},
                 anchor=north,
   /tikz/every even column/.append style={column sep=0.8cm},
   legend columns=3,
   draw=none},
   legend cell align={left},
   yshift=-4.5cm]

   \addplot[blue,mark=*, mark size=1pt] table[col sep=comma, x index=0, y index=1] {\data/2km.csv};
   \addlegendentry{\stdnewton{}}
   \addplot[orange,mark=*, mark size=1pt] table[col sep=comma, x index=0, y index=2] {\data/2km.csv};
   \addlegendentry{\svnewton{}}
   \addplot[dashed,black,no markers] coordinates {(-0.2,200) (8.2,200)};
   \addlegendentry{200 iterations.}
   \addplot[violet!80!white,mark=*, mark size=1pt] table[col sep=space, x expr=\thisrowno{0}*2.08333E-02, y index=1] {\data/newton_iterations_2km.txt};
   \addlegendentry{\mitgcm{}}
   \addplot[green!80!black,mark=*, mark size=1pt] table[col sep=comma, x index=0, y index=1] {\data/Newton_DDD_2km_new.csv};
   \addlegendentry{\gascoigne{}}
   \node[rectangle] (A) at (48, 180)[right]{\small{\textcolor{blue}{\textbf{Failed (4.5 day)}}}};
   \addplot[only marks,blue,mark=*,mark size=1pt,mark indices={216}] table[col sep=comma, x index=0, y index=1]{\data/2km.csv};
   
   \node[rectangle, fill=gray!50!white] (A) at (0, 190)[right]{$\Delta x = 2$ km; $\Delta_{\text{min}}=2\times 10^{-9}$};
\end{axis}
\end{tikzpicture}
\caption{Convergence behavior of different Newton-type solvers for
  Problem II with mesh sizes $\Delta x=\SI{4}{\km}$ and $\Delta
  x=\SI{2}{\km}$. Shown is the number of Newton iterations to reduce
  the nonlinear residual by a factor of $10^4$ at each time step. For
  data from \mitgcm{}, the simulation continues even when the
  residual is not sufficiently decreased after $200$ iterations.
}
\label{fig:conv}
\end{figure}
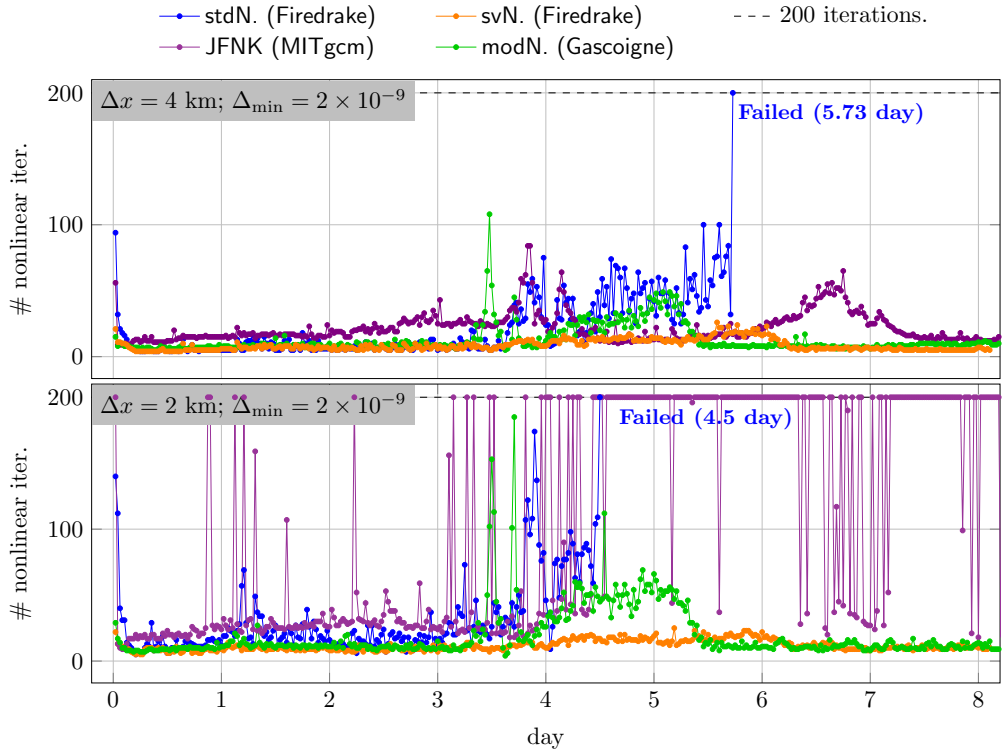

\subsection{Iterative solution of linearized systems}\label{sec:linear} 
Sea-ice simulations can easily have
millions of spatial unknowns, and thus it might be infeasible to use
direct solvers for the linearized systems arising in each Newton
step. Several iterative approaches, typically preconditioned Krylov
methods, have been proposed to solve the arising linear systems. For
instance, a line successive over-relaxation method (LSOR) as
preconditioner is proposed in the context of a Jacobian-free
Newton-Krylov solver in \cite{Lemieux2010}.  In \cite{Losch2014}, the
authors suggest to replace the computationally expensive LSOR
preconditioner by an incomplete LU-factorization (ILU) preconditioner,
in particular for high spatial resolutions. The performance of a
geometrical multigrid (GMG) method as a preconditioner is compared to
an ILU preconditioner in \cite{MehlmannRichter2016mg}. Multigrid
preconditioning is shown to substantially reduce the computational
cost and decreased iteration counts by 80\% compared to the ILU preconditioner. 
However, GMG requires a hierarchy of
meshes, which is in general not available in climate simulations. This
motivates the use of algebraic multigrid (AMG) preconditioners, which
we discuss next.  

Note that each Newton linearization of the momentum equation is a
symmetric elliptic vector system with strongly varying and anisotropic
coefficients, which are a result of the severe nonlinearity of the
 constitutive relation. For the standard Newton linearization, symmetry and positive-definiteness has been shown in \cite{MehlmannRichter2016newton} by reformulating the momentum equation to \eqref{eq:standardNewton}
and follows directly from the underlying energy minimization formulation
\eqref{eq:energy}. For the stress--velocity Newton method, symmetry and
ellipticity are not automatically satisfied due to the introduction of the
variable $\bs \pi$. Hence, positive-definiteness
and symmetry of the linearization are enforced through the modification
of the Jacobian given in \eqref{eq:modification}. This makes the
system suitable for iterative solvers that have been successfully used
for elliptic systems, such as AMG.

In particular, we use the flexible generalized minimum residual
(FGMRES) method, preconditioned with AMG. As AMG
implementation, we use the parallel implementation BoomerAMG
\cite{RugeStuben} through PETSc's interface to Hypre \cite{FalgoutYang02}. We mostly use
default parameters in BoomerAMG including a symmetric successive
over-relaxation smoother, Gaussian elimination on the coarsest level,
Falgout coarsening and a V-cycle. However, we made two changes that
are necessary due to the strongly varying coefficients, namely we
choose $3$ smoothing steps on each level 
and we set the strong threshold to $0.5$
(compared to the usual value of $0.25$ for two-dimensional
problems). The latter choice results in a smaller number of nonzero
entries for the coarser AMG matrices. The reduced fill-in helps
control the operator complexity and results in faster V-cycles without
degrading the preconditioning performance.


On the left of \Cref{fig:amg}, we show the number of Krylov iterations
required to solve the (stress-velocity) Newton linearizations arising
in the momentum equation at the first time step of Problem II. We either use AMG or
an ILU preconditioner and show results for various mesh
resolutions $\Delta x$. We find that on the coarsest mesh, i.e., $\Delta x = \SI{4}{\km}$, 
the two preconditioners behave similarly. 
On finer meshes, 
AMG outperforms the ILU preconditioner significantly. In fact, FGMRES with ILU preconditioning fails to converge within 300 iterations for meshes with resolution higher than \SI{4}{\km} (not all shown in the figure). Note that the preconditioning performance of ILU could  possibly be improved by reordering the unknowns. 
The number of Krylov iterations needed when using the AMG
preconditioner increases moderately when we refine the mesh. The same behavior has been observed by using a GMG preconditioner in \cite{MehlmannRichter2016mg}.
Such an increase is typically not observed for multigrid preconditioners
for simpler elliptic problems. The reason for the behavior
here is that upon mesh refinement, the VP constitutive
relation leads to smaller-scale features and thus steeper
gradients in the solution fields as can be seen from \Cref{fig:shear}. Thus, finer
discretizations resolve the nonlinearity better and thus the linear
systems become harder to solve.

To show that the behavior we observe for the first time step is
representative for later time steps, the right plot in \Cref{fig:amg}
shows the average number of AMG-preconditioned Krylov iterations for
each linear solve for all time steps for a simulation up to day
4. Note that the AMG-preconditioned Krylov solver converges robustly
for all time steps and all linear systems arising from the
stress-velocity Newton linearization. We again observe that this
average number is moderately larger on finer meshes, most likely
due to the more complex behavior that is resolved on finer meshes,
which makes the resulting systems more ill-conditioned.

\begin{figure}
\centering
\begin{tikzpicture}[scale=0.9] 
\begin{groupplot}[
group style={group size=2 by 1, horizontal sep=2cm,},
]
\nextgroupplot[
grid=major,
xlabel=nonlinear iter for 1st time step,
ylabel=$\#$ Krylov iter.,
ylabel near ticks,
title style={align=center},
legend to name={AMGLegend},legend style={legend columns=4,draw=none,
/tikz/every even column/.append style={column sep=0.4cm},
font=\small},
legend cell align={left},
ymin=0, ymax=62,
xmin=0,xmax=25]
\addplot[blue,mark=*, mark size=1pt] table[col sep=comma,x index=0, y index=1] {\data/firstmomen_iter.txt};
\addlegendentry{AMG, $4$ km}
\addplot[green!40!black,mark=square*, mark size=1pt] table[col sep=comma,x index=0, y index=2] {\data/firstmomen_iter.txt};
\addlegendentry{AMG, $2$ km}
\addplot[orange,mark=triangle*, mark size=1.8pt] table[col sep=comma,x index=0, y index=3] {\data/firstmomen_iter.txt};
\addlegendentry{AMG, $1$ km}
\addplot[violet,mark=diamond*, mark size=1.8pt] table[col sep=comma,x index=0, y index=4] {\data/firstmomen_iter.txt};
\addlegendentry{AMG, $0.5$ km}
\addplot[blue!70!white,mark=*, dashed, mark size=1pt, line width=1pt,
  mark options=solid] table[col sep=comma,x index=0, y index=5] {\data/firstmomen_iter.txt};
\addlegendentry{ILU(2), $4$ km}
\addplot[green!80!black,dashed,mark=square*, mark size=1pt, line
  width=1pt, mark options=solid] table[col sep=comma,x index=0, y index=6] {\data/firstmomen_iter.txt};
\addlegendentry{ILU(2), $2$ km}
\addplot[orange!70!white,dashed,mark=triangle*, mark size=1.8pt, line
  width=1pt, mark options=solid] table[col sep=comma,x index=0, y index=7] {\data/firstmomen_iter.txt};
\addplot[dashed,black,no markers] coordinates {(-0,300) (26,300)};

\nextgroupplot[
grid=major,
xlabel=day,
ylabel=avg.\ $\#$ Krylov iter.,
ymin=0,ymax=62,
ylabel near ticks,
xmin=0,xmax=4]
\addplot[blue,mark=*, mark size=1pt] table[col sep=comma, x index=4, y index=3] {\data/amg/4km_process.csv};
\addplot[green!40!black,mark=square*, mark size=1pt] table[col sep=comma, x index=4, y index=3] {\data/amg/2km_process.csv};
\addplot[orange,mark=triangle*, mark size=1.8pt] table[col sep=comma, x index=4, y index=3] {\data/amg/1km_process.csv};
\addplot[violet,mark=diamond*, mark size=1.8pt] table[col sep=comma, x index=4, y index=3] {\data/amg/05km_process.csv};

\end{groupplot}
\path (group c1r1.north east) -- node[above, yshift=0.1cm]{\ref{AMGLegend}} (group c2r1.north west);
\end{tikzpicture}
\caption{Convergence of preconditioned Krylov method for linearized
  systems for Problem II. Shown on the left is the number of
  preconditioned Krylov iterations ($y$-axis) required for each Newton
  linearization ($x$-axis) for the first time step.  Shown are results
  for differently mesh sizes $\Delta x$ using an AMG or an incomplete
  LU preconditioner. Shown on the right is the number of average
  Krylov iterations preconditioned with AMG needed for the linear
  Newton solves at each time step. Note that the number of Krylov
  iterations varies but remains moderate for all Newton linearizations
  throughout the 4-day simulation. On average, 14, 17, 22 and 30 Krylov iterations
  are needed for the meshes with $\Delta x=4 \text{km},2 \text{km},1
  \text{km},0.5 \text{km}$, respectively.}
\label{fig:amg}
\end{figure}
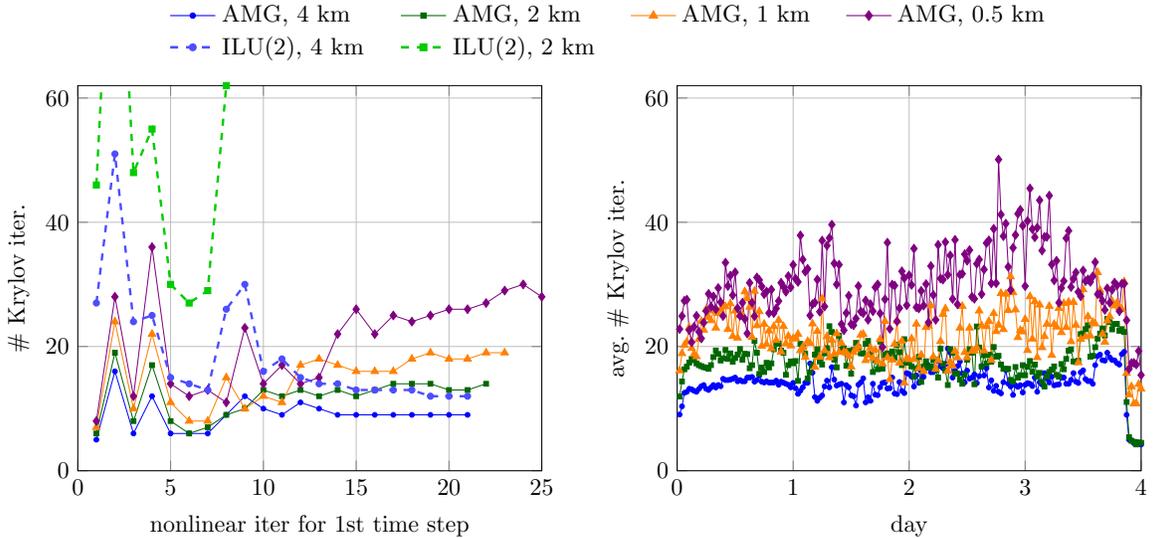

Finally, we study the weak parallel scalability of our solver and
present a comparison of timings in \Cref{fig:runtime}. In a weak
scaling study, one increases the problem size (by refining the mesh)
and, at the same time, increases the computational resources used for
solving the problem. When this is done such that the computational
work per compute core remains roughly the same, perfect weak
scalability would amount to constant run time. As baseline, we show
the time for the first momentum solve based on the standard Newton
linearization with line search and the direct parallel solver MUMPS
\cite{MUMPS01} in \Cref{fig:runtime} (blue bars). Since MUMPS's (and
any direct solver's) parallel scalability is limited, we do not go
beyond one compute node, i.e., the 56 CPU cores that share the same
physical memory. The substantial growth in runtime we observe as we go
from a \SI{4}{\km} to a \SI{0.25}{\km} mesh is primarily due to the
increasing number of Newton steps upon mesh refinement. Using the
proposed alternative Newton linearization results in a substantial
improvement due to a much more moderate increase in the number of
Newton iterations and resulting wall time (green bars).  For instance,
from \Cref{table:meaniter} we observe that the number of Newton
iteration increases from 16 for $\Delta x=\SI{1}{\km}$ to 20 for
$\Delta x=\SI{0.5}{\km}$. The growth in run time is more significant,
however, due to the limited weak parallel scalability of direct
solvers. Finally, we combine the proposed Newton linearization with
the AMG-preconditioned Krylov method to solve the linear problems
(orange bars).
First, we observe a significant redution in overall run time. The
remaining growth going from coarse to fine meshes is mainly due to the increasing number of
Krylov iterations when the mesh is refined (compare the left plot in
\Cref{fig:amg}). This increase is larger for the first momentum solve
than on average as can bee seen on the right plot in \Cref{fig:amg},
where we find that the average number of Krylov iterations goes up by
a factor of $\approx 2$ comparing $\Delta x=\SI{4}{\km}$ with $\Delta
x=\SI{0.5}{\km}$. Hence, on average, we expect a more favorable
scalability of our stress--velocity Newton linearization with
AMG-preconditioned Krylov solver than shown in \Cref{fig:runtime}. An
additional factor contributing to the increase in solve time is the
required Message Passing Interface communication through the network
due to parallelism.

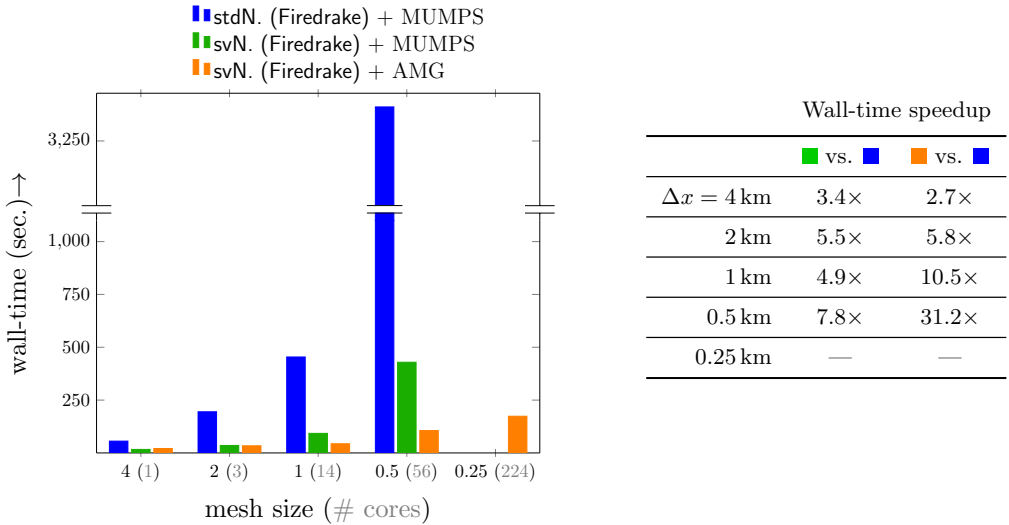
\begin{figure}
\centering
\makeatletter
\pgfplotsset{
    groupplot xlabel/.initial={},
    every groupplot x label/.style={
        at={($({\pgfplots@group@name\space c1r\pgfplots@group@rows.west}|-{\pgfplots@group@name\space c1r\pgfplots@group@rows.outer south})!0.5!({\pgfplots@group@name\space c\pgfplots@group@columns r\pgfplots@group@rows.east}|-{\pgfplots@group@name\space c\pgfplots@group@columns r\pgfplots@group@rows.outer south})$)},
        anchor=north,
    },
    groupplot ylabel/.initial={},
    every groupplot y label/.style={
            rotate=90,
        at={($({\pgfplots@group@name\space c1r1.north}-|{\pgfplots@group@name\space c1r1.outer
west})!0.5!({\pgfplots@group@name\space c1r\pgfplots@group@rows.south}-|{\pgfplots@group@name\space c1r\pgfplots@group@rows.outer west})$)},
        anchor=south
    },
    execute at end groupplot/.code={%
      \node [/pgfplots/every groupplot x label]
{\pgfkeysvalueof{/pgfplots/groupplot xlabel}};  
      \node [/pgfplots/every groupplot y label] 
{\pgfkeysvalueof{/pgfplots/groupplot ylabel}};  
    }
}

\def\endpgfplots@environment@groupplot{%
    \endpgfplots@environment@opt%
    \pgfkeys{/pgfplots/execute at end groupplot}%
    \endgroup%
}
\makeatother

\pgfplotsset{ every non boxed x axis/.append style={x axis line style=-},
     every non boxed y axis/.append style={y axis line style=-}}
\begin{tikzpicture}[baseline=(current axis.outer east),scale=0.7]
\begin{groupplot}[
    group style={
        group size=1 by 2,
		xticklabels at=edge bottom,
        vertical sep=0pt
    },
	xmin=0.5, xmax=5.5,
    width=10cm,
    groupplot ylabel={wall-time (sec.)$\rightarrow$},
    groupplot xlabel={mesh size (\textcolor{gray}{\# cores})},
]
\nextgroupplot[ymin=3000,
               ymax=3400,
               ybar,
               axis x line=top, 
               axis y discontinuity=parallel,
			   ytick={3250},
	           height=4.0cm,
              ]
\addplot[blue,fill=blue] table[col sep=comma, y index=5]{\data/amg_weak.txt};
\addplot[green,fill=green] table[col sep=comma,y index=6]{\data/amg_weak.txt};
\addplot[green,fill=green] table[col sep=comma,y index=6]{\data/amg_weak.txt};
\nextgroupplot[ymin=0,
               ymax=1100,
               ybar,
               axis x line=bottom,
               xtick = {1,2,3,4,5},
               xticklabels={$4$ (\textcolor{gray}{1}),
                $2$ (\textcolor{gray}{3}),
                $1$ (\textcolor{gray}{14}),
                $0.5$ (\textcolor{gray}{56}),
                $0.25$ (\textcolor{gray}{224})},
                height=6.0cm,
                ytick={250,500,750,1000},
                legend style={draw=none,at={(0.55, 1.95),row sep=2pt},
                              anchor=north},
                legend style={font=\large},
                legend cell align={left},
                    after end axis/.code={ 
                \draw [line width=0.14cm, white, decorate] (rel axis cs:0.1,1.05) -- (rel axis cs:0.9,1.05);
                \draw [black, thin] (rel axis cs:0.61,1.064) -- (rel axis cs:0.69,1.064);
                \draw [black, thin] (rel axis cs:0.61,1.032) -- (rel axis cs:0.69,1.032);
                }]
\addplot[blue,fill=blue] table[col sep=comma,y index=2]{\data/amg_weak.txt};
\addlegendentry{\stdnewton{} + MUMPS}
\addplot[green!65!black!90!yellow,fill=green!65!black!90!yellow] table[col sep=comma,y index=3]{\data/amg_weak.txt};
\addlegendentry{\svnewton{} + MUMPS}
\addplot[orange,fill=orange] table[col sep=comma,y index=4]{\data/amg_weak.txt};
\addlegendentry{\svnewton{} + AMG}
\end{groupplot}
\end{tikzpicture}
\hspace*{5ex}
\scalebox{0.95}{
{
\small
\begin{tabular}{rcc}
&\multicolumn{2}{c}{Wall-time speedup}\\[1ex]
\toprule
    & \begin{tikzpicture}\node[rectangle,fill=green!80!black] (r) at (0,0) {};\end{tikzpicture} vs. \begin{tikzpicture}\node[rectangle,fill=blue] (r) at (0,0) {};\end{tikzpicture} 
		& \begin{tikzpicture}\node[rectangle,fill=orange] (r) at (0,0) {};\end{tikzpicture} vs. \begin{tikzpicture}\node[rectangle,fill=blue] (r) at (0,0) {};\end{tikzpicture}\\\midrule
    $\Delta x = \SI{4}{\km}$ &  3.4$\times$ & 2.7$\times$\\\midrule
    \SI{2}{\km}&  5.5$\times$ & 5.8$\times$\\\midrule
    \SI{1}{\km}&  4.9$\times$ & 10.5$\times$\\\midrule
    \SI{0.5}{\km}&  7.8$\times$ & 31.2$\times$\\\midrule
    \SI{0.25}{\km}&  --- & --- \\\bottomrule
\end{tabular}
}}
\caption{Weak scalability on Texas Advanced Computing Center's
  Frontera (Intel Cascade Lake nodes) for first momentum solve in
  Problem II. Shown on the left are the wall clock times for a weak
  parallel scaling study.  Shown in brackets is the number of Message
  Passing Interface (MPI) processes chosen so that the number of
  unknowns per core are around $32$K. Timings for three
  implementations are compared, namely a standard Newton method (blue)
  and the proposed velocity-stress Newton method (green), both using
  the direct linear solver MUMPS \cite{MUMPS01}. Shown in orange are
  timings for the velocity-stress Newton method with an
  AMG-preconditioned Krylov method as iterative linear solver. For
  mesh resolution of \SI{0.25}{\km}, only the solver proposed in this
  paper can be used as the problem has 8.4 million unknowns. The table
  on the right compares the speedup factors for the different
  methods.}
\label{fig:runtime}
\end{figure}

\section{Conclusion}
We have shown that a novel linearization of the momentum equation
arising in the commonly used VP sea-ice model results in
faster and more robust Newton convergence, with iteration numbers that
only increase moderately upon mesh refinement (see the table in \Cref{fig:cond} and \Cref{table:meaniter}).
The approach is motivated by the
observation that the implicit time step of the momentum equation can
also be written as an energy minimization problem.
This suggested the alternative Newton linearization presented here,
which is similar to techniques used in primal-dual optimization
algorithms. We hope that these ideas will eventually be adopted in the
sea-ice community as they substantially reduce the computational cost
for implicitly time-stepped VP models and thus enable finer mesh
resolutions and better resolved models. More generally, the approach
to lift nonlinear equations to a higher dimensional space before
Newton linearization could be useful for other challenging systems as
well.


Note that our derivation of the alternative Newton step assumes an
elliptic yield curve, i.e., the most common yield criterion for
viscous-plastic sea-ice models. Other yield curves are possible and
scientifically relevant
(e.g., \cite{Ringeisen21, Zhang21}) and it remains to be seen if our
approach generalizes to these different constitutive relationships.

We additionally study iterative parallel solvers for the
large and ill-conditioned (but positive definite) systems arising upon
Newton linearizations. We propose and study AMG
as preconditioner for a Krylov method to solve these systems.  
Computational efficiency is a strong constraint for sea-ice models in
the climate modeling context. Any new method will need to demonstrate
that it can meet this requirement so that accurate sea ice dynamics
solutions become available at reasonable cost. The computational cost
of the primal-dual Newton-Krylov solver with AMG preconditioning
increases only moderately with grid refinement. Further, AMG does not
require a mesh hierarchy and efficient parallel open-source AMG
libraries are available, so that this combination of methods may be a
good candidate for solving the momentum equation of sea-ice models in
particular, and for climate simulations in general.

\section*{Acknowledgements}
\noindent
This work has been supported by the Multidisciplinary University
Research Initiatives (MURI) Program, Office of Naval Research (ONR)
grant number N00014-19-1-2421, and by the DFG priority program
``Antarctic Research with Comparative Investigations in Arctic Ice
Areas'' project number 463061012.

\bibliography{extra,seaice,firedrake}

\begin{thebibliography}{51}
\expandafter\ifx\csname natexlab\endcsname\relax\def\natexlab#1{#1}\fi
\providecommand{\url}[1]{\texttt{#1}}
\providecommand{\href}[2]{#2}
\providecommand{\path}[1]{#1}
\providecommand{\DOIprefix}{doi:}
\providecommand{\ArXivprefix}{arXiv:}
\providecommand{\URLprefix}{URL: }
\providecommand{\Pubmedprefix}{pmid:}
\providecommand{\doi}[1]{\href{http://dx.doi.org/#1}{\path{#1}}}
\providecommand{\Pubmed}[1]{\href{pmid:#1}{\path{#1}}}
\providecommand{\bibinfo}[2]{#2}
\ifx\xfnm\relax \def\xfnm[#1]{\unskip,\space#1}\fi
\bibitem[{Acosta et~al.(2011)Acosta, Apel, Duran and Lombardi}]{Acosta2011}
\bibinfo{author}{Acosta, G.}, \bibinfo{author}{Apel, T.},
  \bibinfo{author}{Duran, R.}, \bibinfo{author}{Lombardi, A.},
  \bibinfo{year}{2011}.
\newblock \bibinfo{title}{Error estimates for {R}aviart-{T}homas interpolation
  of any order on anisotropic tetrahedra.}
\newblock \bibinfo{journal}{Math. Comput.} \bibinfo{volume}{80},
  \bibinfo{pages}{141--163}.
\bibitem[{Adcroft et~al.(2018)Adcroft, Campin, Dutkiewicz, Evangelinos,
  D.~Ferreira, Forget, Fox-Kemper, Heimbach, Hill, Hill, Hill, Jahn, Losch,
  Marshall, Maze, Menemenlis and Molod}]{Adcroft2018}
\bibinfo{author}{Adcroft, A.}, \bibinfo{author}{Campin, J.},
  \bibinfo{author}{Dutkiewicz, S.}, \bibinfo{author}{Evangelinos, C.},
  \bibinfo{author}{D.~Ferreira, D.}, \bibinfo{author}{Forget, G.},
  \bibinfo{author}{Fox-Kemper, B.}, \bibinfo{author}{Heimbach, P.},
  \bibinfo{author}{Hill, C.}, \bibinfo{author}{Hill, E.},
  \bibinfo{author}{Hill, H.}, \bibinfo{author}{Jahn, O.},
  \bibinfo{author}{Losch, M.}, \bibinfo{author}{Marshall, J.},
  \bibinfo{author}{Maze, G.}, \bibinfo{author}{Menemenlis, D.},
  \bibinfo{author}{Molod, A.}, \bibinfo{year}{2018}.
\newblock \bibinfo{title}{{MIT}gcm {U}ser {M}anual}.
\bibitem[{Amestoy et~al.(2001)Amestoy, Duff, L'Excellent and Koster}]{MUMPS01}
\bibinfo{author}{Amestoy, P.R.}, \bibinfo{author}{Duff, I.S.},
  \bibinfo{author}{L'Excellent, J.Y.}, \bibinfo{author}{Koster, J.},
  \bibinfo{year}{2001}.
\newblock \bibinfo{title}{A fully asynchronous multifrontal solver using
  distributed dynamic scheduling}.
\newblock \bibinfo{journal}{SIAM Journal on Matrix Analysis and Applications}
  \bibinfo{volume}{23}, \bibinfo{pages}{15--41}.
\bibitem[{Balay et~al.(2019)Balay, Abhyankar, Adams, Brown, Brune, Buschelman,
  Dalcin, Eijkhout, Gropp, Karpeyev, Kaushik, Knepley, May, McInnes, Mills,
  Munson, Rupp, Sanan, Smith, Zampini, Zhang and Zhang}]{petsc-user-ref}
\bibinfo{author}{Balay, S.}, \bibinfo{author}{Abhyankar, S.},
  \bibinfo{author}{Adams, M.F.}, \bibinfo{author}{Brown, J.},
  \bibinfo{author}{Brune, P.}, \bibinfo{author}{Buschelman, K.},
  \bibinfo{author}{Dalcin, L.}, \bibinfo{author}{Eijkhout, V.},
  \bibinfo{author}{Gropp, W.D.}, \bibinfo{author}{Karpeyev, D.},
  \bibinfo{author}{Kaushik, D.}, \bibinfo{author}{Knepley, M.G.},
  \bibinfo{author}{May, D.A.}, \bibinfo{author}{McInnes, L.C.},
  \bibinfo{author}{Mills, R.T.}, \bibinfo{author}{Munson, T.},
  \bibinfo{author}{Rupp, K.}, \bibinfo{author}{Sanan, P.},
  \bibinfo{author}{Smith, B.F.}, \bibinfo{author}{Zampini, S.},
  \bibinfo{author}{Zhang, H.}, \bibinfo{author}{Zhang, H.},
  \bibinfo{year}{2019}.
\newblock \bibinfo{title}{{PETS}c Users Manual}.
\newblock \bibinfo{type}{Technical Report} \bibinfo{number}{ANL-95/11 -
  Revision 3.11}. Argonne National Laboratory.
\bibitem[{Balay et~al.(1997)Balay, Gropp, McInnes and Smith}]{petsc-efficient}
\bibinfo{author}{Balay, S.}, \bibinfo{author}{Gropp, W.D.},
  \bibinfo{author}{McInnes, L.C.}, \bibinfo{author}{Smith, B.F.},
  \bibinfo{year}{1997}.
\newblock \bibinfo{title}{Efficient management of parallelism in object
  oriented numerical software libraries}, in: \bibinfo{editor}{Arge, E.},
  \bibinfo{editor}{Bruaset, A.M.}, \bibinfo{editor}{Langtangen, H.P.} (Eds.),
  \bibinfo{booktitle}{Modern Software Tools in Scientific Computing},
  \bibinfo{publisher}{Birkh{\"{a}}user Press}. pp. \bibinfo{pages}{163--202}.
\bibitem[{Becker et~al.(2021)Becker, Braack, Meidner, Richter and
  Vexler}]{Gascoigne}
\bibinfo{author}{Becker, R.}, \bibinfo{author}{Braack, M.},
  \bibinfo{author}{Meidner, D.}, \bibinfo{author}{Richter, T.},
  \bibinfo{author}{Vexler, B.}, \bibinfo{year}{2021}.
\newblock \bibinfo{title}{The finite element toolkit \textsc{Gascoigne 3d}}.
\newblock \bibinfo{note}{\url{https://www.gascoigne.de}}.
\bibitem[{Blockley et~al.(2020)Blockley, Vancoppenolle, Hunke, Bitz, Feltham,
  Lemieux, Losch, Maisonnave, Notz, Rampal, Tietsche, Tremblay, Turner,
  Massonnet, Ólason, Roberts, Aksenov, Fichefet, Garric, Iovino, Madec,
  Rousset, Salas~y Melia and Schroeder}]{Blockley2020}
\bibinfo{author}{Blockley, E.}, \bibinfo{author}{Vancoppenolle, M.},
  \bibinfo{author}{Hunke, E.C.}, \bibinfo{author}{Bitz, C.},
  \bibinfo{author}{Feltham, D.L.}, \bibinfo{author}{Lemieux, J.F.},
  \bibinfo{author}{Losch, M.}, \bibinfo{author}{Maisonnave, E.},
  \bibinfo{author}{Notz, D.}, \bibinfo{author}{Rampal, P.},
  \bibinfo{author}{Tietsche, S.}, \bibinfo{author}{Tremblay, B.},
  \bibinfo{author}{Turner, A.}, \bibinfo{author}{Massonnet, F.},
  \bibinfo{author}{Ólason, E.}, \bibinfo{author}{Roberts, A.},
  \bibinfo{author}{Aksenov, Y.}, \bibinfo{author}{Fichefet, T.},
  \bibinfo{author}{Garric, G.}, \bibinfo{author}{Iovino, D.},
  \bibinfo{author}{Madec, G.}, \bibinfo{author}{Rousset, C.},
  \bibinfo{author}{Salas~y Melia, D.}, \bibinfo{author}{Schroeder, D.},
  \bibinfo{year}{2020}.
\newblock \bibinfo{title}{{The Future of Sea Ice Modeling: Where Do We Go from
  Here?}}
\newblock \bibinfo{journal}{Bulletin of the American Meteorological Society}
  \bibinfo{volume}{101}, \bibinfo{pages}{1304--1311}.
\bibitem[{Bouillon et~al.(2013)Bouillon, Fichefet, Legat and
  Madec}]{Boullion2013}
\bibinfo{author}{Bouillon, S.}, \bibinfo{author}{Fichefet, T.},
  \bibinfo{author}{Legat, V.}, \bibinfo{author}{Madec, G.},
  \bibinfo{year}{2013}.
\newblock \bibinfo{title}{The elastic-viscous-plastic method revisited}.
\newblock \bibinfo{journal}{Ocean Modelling} \bibinfo{volume}{71},
  \bibinfo{pages}{2--12}.
\bibitem[{Brandt et~al.(2021)Brandt, Disser, Haller-Dintelmann and
  Hieber}]{BrandtDisserHallerEtAl21}
\bibinfo{author}{Brandt, F.}, \bibinfo{author}{Disser, K.},
  \bibinfo{author}{Haller-Dintelmann, R.}, \bibinfo{author}{Hieber, M.},
  \bibinfo{year}{2021}.
\newblock \bibinfo{title}{Rigorous analysis and dynamics of {H}ibler's sea ice
  model}.
\newblock \href{http://arxiv.org/abs/2104.01336}{{\tt arXiv:2104.01336}}.
\bibitem[{Chan et~al.(1999)Chan, Golub and Mulet}]{ChanGolubMulet99}
\bibinfo{author}{Chan, T.F.}, \bibinfo{author}{Golub, G.H.},
  \bibinfo{author}{Mulet, P.}, \bibinfo{year}{1999}.
\newblock \bibinfo{title}{A nonlinear primal-dual method for total
  variation-based image restoration}.
\newblock \bibinfo{journal}{SIAM Journal on Scientific Computing}
  \bibinfo{volume}{20}, \bibinfo{pages}{1964--1977}.
\newblock \URLprefix \url{https://doi.org/10.1137/S1064827596299767},
  \DOIprefix\doi{10.1137/S1064827596299767}.
\bibitem[{Dalcin et~al.(2011)Dalcin, Paz, Kler and Cosimo}]{Dalcin2011}
\bibinfo{author}{Dalcin, L.D.}, \bibinfo{author}{Paz, R.R.},
  \bibinfo{author}{Kler, P.A.}, \bibinfo{author}{Cosimo, A.},
  \bibinfo{year}{2011}.
\newblock \bibinfo{title}{Parallel distributed computing using {P}ython}.
\newblock \bibinfo{journal}{Advances in Water Resources} \bibinfo{volume}{34},
  \bibinfo{pages}{1124--1139}.
\newblock \DOIprefix\doi{http://dx.doi.org/10.1016/j.advwatres.2011.04.013}.
  \bibinfo{note}{new Computational Methods and Software Tools}.
\bibitem[{Damsgaard et~al.(2018)Damsgaard, Adcroft and
  Sergienko}]{Damsgraad2018}
\bibinfo{author}{Damsgaard, A.}, \bibinfo{author}{Adcroft, A.},
  \bibinfo{author}{Sergienko, O.}, \bibinfo{year}{2018}.
\newblock \bibinfo{title}{Application of discrete element methods to
  approximate sea ice dynamics}.
\newblock \bibinfo{journal}{Journal of Advances in Modeling Earth Systems}
  \bibinfo{volume}{10}, \bibinfo{pages}{2228--2244}.
\bibitem[{Danilov et~al.(2022)Danilov, Mehlmann and Fofonova}]{Danilov2021}
\bibinfo{author}{Danilov, S.}, \bibinfo{author}{Mehlmann, C.},
  \bibinfo{author}{Fofonova, V.}, \bibinfo{year}{2022}.
\newblock \bibinfo{title}{On discretizing sea-ice dynamics on triangular meshes
  using vertex, cell or edge velocities}.
\newblock \bibinfo{journal}{Ocean Modelling} \bibinfo{volume}{170},
  \bibinfo{pages}{101937}.
\bibitem[{Falgout and Yang(2002)}]{FalgoutYang02}
\bibinfo{author}{Falgout, R.}, \bibinfo{author}{Yang, U.},
  \bibinfo{year}{2002}.
\newblock \bibinfo{title}{hypre: A library of high performance
  preconditioners}.
\newblock \bibinfo{journal}{Computational Science-ICCS 2002, Pt Iii,
  Proceedings} \bibinfo{volume}{2331}, \bibinfo{pages}{632--641}.
\newblock \DOIprefix\doi{10.1007/3-540-47789-6_66}.
\bibitem[{Girard et~al.(2011)Girard, Bouillon, Weiss, Amitrano, Fichefet,
  Thierry and Legat}]{Girard2011}
\bibinfo{author}{Girard, L.}, \bibinfo{author}{Bouillon, S.},
  \bibinfo{author}{Weiss, J.}, \bibinfo{author}{Amitrano, D.},
  \bibinfo{author}{Fichefet, T.}, \bibinfo{author}{Thierry, L.},
  \bibinfo{author}{Legat, V.}, \bibinfo{year}{2011}.
\newblock \bibinfo{title}{A new modeling framework for sea-ice mechanics based
  on elasto-brittle rheology}.
\newblock \bibinfo{journal}{Annals of Glaciology} \bibinfo{volume}{52},
  \bibinfo{pages}{123--132}.
\bibitem[{Hendrickson and Leland(1995)}]{Chaco95}
\bibinfo{author}{Hendrickson, B.}, \bibinfo{author}{Leland, R.},
  \bibinfo{year}{1995}.
\newblock \bibinfo{title}{A multilevel algorithm for partitioning graphs}, in:
  \bibinfo{booktitle}{Supercomputing '95: Proceedings of the 1995 ACM/IEEE
  Conference on Supercomputing (CDROM)}, \bibinfo{publisher}{ACM Press},
  \bibinfo{address}{New York}. p.~\bibinfo{pages}{28}.
\newblock \DOIprefix\doi{https://doi.acm.org/10.1145/224170.224228}.
\bibitem[{Herman(2015)}]{Herman2015}
\bibinfo{author}{Herman, A.}, \bibinfo{year}{2015}.
\newblock \bibinfo{title}{Discrete-element bonded particle sea ice model
  design, version 1.3 -- model description and implementation}.
\newblock \bibinfo{journal}{Geoscientific Model Development Discussions}
  \bibinfo{volume}{8}, \bibinfo{pages}{5481--5533}.
\bibitem[{Hibler(1979)}]{Hibler1979}
\bibinfo{author}{Hibler, W.D.}, \bibinfo{year}{1979}.
\newblock \bibinfo{title}{A dynamic thermodynamic sea ice model}.
\newblock \bibinfo{journal}{J. Phys. Oceanogr} \bibinfo{volume}{9},
  \bibinfo{pages}{815--846}.
\bibitem[{Hinterm{\"u}ller and Stadler(2006)}]{HintermullerStadler06}
\bibinfo{author}{Hinterm{\"u}ller, M.}, \bibinfo{author}{Stadler, G.},
  \bibinfo{year}{2006}.
\newblock \bibinfo{title}{An infeasible primal-dual algorithm for total
  variation-based inf-convolution-type image restoration}.
\newblock \bibinfo{journal}{SIAM Journal on Scientific Computing}
  \bibinfo{volume}{28}, \bibinfo{pages}{1--23}.
\bibitem[{Hunke and Dukowicz(1997)}]{HunkeDukowicz1997}
\bibinfo{author}{Hunke, E.C.}, \bibinfo{author}{Dukowicz, J.K.},
  \bibinfo{year}{1997}.
\newblock \bibinfo{title}{An elastic–viscous–plastic model for sea ice
  dynamics}.
\newblock \bibinfo{journal}{Journal of Physical Oceanography}
  \bibinfo{volume}{27}, \bibinfo{pages}{1849 -- 1867}.
\bibitem[{Hutter and Losch(2020)}]{Hutter2020}
\bibinfo{author}{Hutter, N.}, \bibinfo{author}{Losch, M.},
  \bibinfo{year}{2020}.
\newblock \bibinfo{title}{Feature-based comparison of sea ice deformation in
  lead-permitting sea ice simulations}.
\newblock \bibinfo{journal}{The Cryosphere} \bibinfo{volume}{14},
  \bibinfo{pages}{93--113}.
\bibitem[{Ip et~al.(1991)Ip, Hibler and Flato}]{Hibler1991}
\bibinfo{author}{Ip, C.F.}, \bibinfo{author}{Hibler, W.D.},
  \bibinfo{author}{Flato, G.M.}, \bibinfo{year}{1991}.
\newblock \bibinfo{title}{On the effect of rheology on seasonal sea-ice
  simulations}.
\newblock \bibinfo{journal}{Annals of Glaciology} \bibinfo{volume}{15},
  \bibinfo{pages}{17--25}.
\bibitem[{Kimmritz et~al.(2015)Kimmritz, Danilov and Losch}]{Kimmritz2015}
\bibinfo{author}{Kimmritz, M.}, \bibinfo{author}{Danilov, S.},
  \bibinfo{author}{Losch, M.}, \bibinfo{year}{2015}.
\newblock \bibinfo{title}{On the convergence of the modified
  elastic-viscous-plastic method for solving the sea ice momentum equation}.
\newblock \bibinfo{journal}{J. Comp. Phys.} \bibinfo{volume}{296},
  \bibinfo{pages}{90--100}.
\bibitem[{Kimmritz et~al.(2017)Kimmritz, Losch and Danilov}]{KIMMRITZ2017}
\bibinfo{author}{Kimmritz, M.}, \bibinfo{author}{Losch, M.},
  \bibinfo{author}{Danilov, S.}, \bibinfo{year}{2017}.
\newblock \bibinfo{title}{A comparison of viscous-plastic sea ice solvers with
  and without replacement pressure}.
\newblock \bibinfo{journal}{Ocean Modelling} \bibinfo{volume}{115},
  \bibinfo{pages}{59--69}.
\bibitem[{Koldunov et~al.(2019)Koldunov, Danilov, Sidorenko, Hutter, Losch,
  Goessling, Rakowsky, Scholz, Sein, Wang and Jung}]{Koldunov19}
\bibinfo{author}{Koldunov, N.}, \bibinfo{author}{Danilov, S.},
  \bibinfo{author}{Sidorenko, D.}, \bibinfo{author}{Hutter, N.},
  \bibinfo{author}{Losch, M.}, \bibinfo{author}{Goessling, H.},
  \bibinfo{author}{Rakowsky, N.}, \bibinfo{author}{Scholz, P.},
  \bibinfo{author}{Sein, D.}, \bibinfo{author}{Wang, Q.},
  \bibinfo{author}{Jung, T.}, \bibinfo{year}{2019}.
\newblock \bibinfo{title}{Fast {EVP} {S}olutions in a {H}igh-{R}esolution {S}ea
  {I}ce {M}odel}.
\newblock \bibinfo{journal}{Journal of Advances in Modeling Earth Systems}
  \bibinfo{volume}{11}, \bibinfo{pages}{1269--1284}.
\bibitem[{Kreyscher et~al.(2000)Kreyscher, Harder, Lemke and
  Flato}]{KreyscherHarderLemkeEtAl00}
\bibinfo{author}{Kreyscher, M.}, \bibinfo{author}{Harder, M.},
  \bibinfo{author}{Lemke, P.}, \bibinfo{author}{Flato, G.M.},
  \bibinfo{year}{2000}.
\newblock \bibinfo{title}{Results of the sea ice model intercomparison project:
  Evaluation of sea ice rheology schemes for use in climate simulations}.
\newblock \bibinfo{journal}{Journal of Geophysical Research: Oceans}
  \bibinfo{volume}{105}, \bibinfo{pages}{11299--11320}.
\newblock \DOIprefix\doi{https://doi.org/10.1029/1999JC000016}.
\bibitem[{Kwok et~al.(2008)Kwok, Hunke, Maslowski, Menemenlis and
  Zhang}]{Kwok2008}
\bibinfo{author}{Kwok, R.}, \bibinfo{author}{Hunke, E.},
  \bibinfo{author}{Maslowski, W.}, \bibinfo{author}{Menemenlis, D.},
  \bibinfo{author}{Zhang, J.}, \bibinfo{year}{2008}.
\newblock \bibinfo{title}{Variability of sea ice simulations assessed with
  {RGPS} kinematics}.
\newblock \bibinfo{journal}{Journal of Geophysical Research: Oceans}
  \bibinfo{volume}{113}.
\bibitem[{Lemieux et~al.(2012)Lemieux, Knoll, Tremblay, Holland and
  Losch}]{Lemieux2012}
\bibinfo{author}{Lemieux, J.F.}, \bibinfo{author}{Knoll, D.},
  \bibinfo{author}{Tremblay, B.}, \bibinfo{author}{Holland, D.},
  \bibinfo{author}{Losch, M.}, \bibinfo{year}{2012}.
\newblock \bibinfo{title}{A comparison of the {J}acobian-free {N}ewton-{K}rylov
  method and the {EVP} model for solving the sea ice momentum equation with a
  viscous-plastic formulation: a serial algorithm study}.
\newblock \bibinfo{journal}{J. Comp. Phys.} \bibinfo{volume}{231},
  \bibinfo{pages}{5926--5944}.
\bibitem[{Lemieux and Tremblay(2009)}]{Lemieux2009}
\bibinfo{author}{Lemieux, J.F.}, \bibinfo{author}{Tremblay, B.},
  \bibinfo{year}{2009}.
\newblock \bibinfo{title}{Numerical convergence of viscous-plastic sea ice
  models}.
\newblock \bibinfo{journal}{Journal of Geophysical Research: Oceans}
  \bibinfo{volume}{114}.
\bibitem[{Lemieux et~al.(2010)Lemieux, Tremblay, Sedl\'{a}\v{c}ek, Tupper,
  Thomas, Huard and Auclair}]{Lemieux2010}
\bibinfo{author}{Lemieux, J.F.}, \bibinfo{author}{Tremblay, B.},
  \bibinfo{author}{Sedl\'{a}\v{c}ek, J.}, \bibinfo{author}{Tupper, P.},
  \bibinfo{author}{Thomas, S.}, \bibinfo{author}{Huard, D.},
  \bibinfo{author}{Auclair, J.}, \bibinfo{year}{2010}.
\newblock \bibinfo{title}{Improving the numerical convergence of
  viscous-plastic sea ice models with the {J}acobian-free {N}ewton-{K}rylov
  method}.
\newblock \bibinfo{journal}{J. Comp. Phys.} \bibinfo{volume}{229},
  \bibinfo{pages}{2840--2852}.
\bibitem[{Liu et~al.(2022)Liu, Thomas and Titi}]{LiuThomasTiti21}
\bibinfo{author}{Liu, X.}, \bibinfo{author}{Thomas, M.}, \bibinfo{author}{Titi,
  E.S.}, \bibinfo{year}{2022}.
\newblock \bibinfo{title}{Well-posedness of {H}ibler’s dynamical sea-ice
  model}.
\newblock \bibinfo{journal}{Journal of Nonlinear Science} \bibinfo{volume}{32},
  \bibinfo{pages}{1--31}.
\bibitem[{Losch et~al.(2014)Losch, Fuchs, Lemieux and Vanselow}]{Losch2014}
\bibinfo{author}{Losch, M.}, \bibinfo{author}{Fuchs, A.},
  \bibinfo{author}{Lemieux, J.F.}, \bibinfo{author}{Vanselow, A.},
  \bibinfo{year}{2014}.
\newblock \bibinfo{title}{A parallel {J}acobian-free {N}ewton-{K}rylov solver
  for a coupled sea ice-ocean model}.
\newblock \bibinfo{journal}{J. Comp. Phys.} \bibinfo{volume}{257},
  \bibinfo{pages}{901--911}.
\bibitem[{Mehlmann(2019)}]{Mehlmann2019}
\bibinfo{author}{Mehlmann, C.}, \bibinfo{year}{2019}.
\newblock \bibinfo{title}{Efficient numerical methods to solve the
  viscous-plastic sea ice model at high spatial resolutions}.
\newblock Ph.D. thesis. Otto-von-Guericke-Universität Magdeburg, Fakultät
  für Mathematik.
\newblock \URLprefix \url{http://dx.doi.org/10.25673/14011}.
\bibitem[{Mehlmann et~al.(2021)Mehlmann, Danilov, Losch, Lemieux, Hutter,
  Richter, Blain, Hunke and Korn}]{Mehlmann2021}
\bibinfo{author}{Mehlmann, C.}, \bibinfo{author}{Danilov, S.},
  \bibinfo{author}{Losch, M.}, \bibinfo{author}{Lemieux, J.F.},
  \bibinfo{author}{Hutter, N.}, \bibinfo{author}{Richter, T.},
  \bibinfo{author}{Blain, P.}, \bibinfo{author}{Hunke, E.C.},
  \bibinfo{author}{Korn, P.}, \bibinfo{year}{2021}.
\newblock \bibinfo{title}{Simulating linear kinematic features in
  viscous-plastic sea ice models on quadrilateral and triangular grids with
  different variable staggering}.
\newblock \bibinfo{journal}{Journal of Advances in Modeling Earth Systems}
  \bibinfo{volume}{13}, \bibinfo{pages}{e2021MS002523}.
\bibitem[{Mehlmann and Richter(2017a)}]{MehlmannRichter2016mg}
\bibinfo{author}{Mehlmann, C.}, \bibinfo{author}{Richter, T.},
  \bibinfo{year}{2017}a.
\newblock \bibinfo{title}{A finite element multigrid-framework to solve the sea
  ice momentum equation}.
\newblock \bibinfo{journal}{J. Comp. Phys.} \bibinfo{volume}{348},
  \bibinfo{pages}{847--861}.
\bibitem[{Mehlmann and Richter(2017b)}]{MehlmannRichter2016newton}
\bibinfo{author}{Mehlmann, C.}, \bibinfo{author}{Richter, T.},
  \bibinfo{year}{2017}b.
\newblock \bibinfo{title}{A modified global {N}ewton solver for viscous-plastic
  sea ice models}.
\newblock \bibinfo{journal}{Ocean Modeling} \bibinfo{volume}{116},
  \bibinfo{pages}{96--107}.
\bibitem[{Mitchell and Muller(2016)}]{Mitchell2016}
\bibinfo{author}{Mitchell, L.}, \bibinfo{author}{Muller, E.H.},
  \bibinfo{year}{2016}.
\newblock \bibinfo{title}{High level implementation of geometric multigrid
  solvers for finite element problems: applications in atmospheric modelling}.
\newblock \bibinfo{journal}{Journal of Computational Physics}
  \bibinfo{volume}{327}, \bibinfo{pages}{1--18}.
\newblock \URLprefix \url{http://arxiv.org/abs/1605.00492},
  \DOIprefix\doi{10.1016/j.jcp.2016.09.037},
  \href{http://arxiv.org/abs/1605.00492}{{\tt arXiv:1605.00492}}.
\bibitem[{Rampal et~al.(2016)Rampal, Bouillon, Olason and
  Morlighem}]{Rampal2016}
\bibinfo{author}{Rampal, P.}, \bibinfo{author}{Bouillon, S.},
  \bibinfo{author}{Olason, E.}, \bibinfo{author}{Morlighem, M.},
  \bibinfo{year}{2016}.
\newblock \bibinfo{title}{ne{XtSIM}: a new {L}agrangian sea ice model}.
\newblock \bibinfo{journal}{The Cryosphere} \bibinfo{volume}{10},
  \bibinfo{pages}{1055--1073}.
\bibitem[{Rathgeber et~al.(2016)Rathgeber, Ham, Mitchell, Lange, Luporini,
  McRae, Bercea, Markall and Kelly}]{Rathgeber2016}
\bibinfo{author}{Rathgeber, F.}, \bibinfo{author}{Ham, D.A.},
  \bibinfo{author}{Mitchell, L.}, \bibinfo{author}{Lange, M.},
  \bibinfo{author}{Luporini, F.}, \bibinfo{author}{McRae, A.T.T.},
  \bibinfo{author}{Bercea, G.T.}, \bibinfo{author}{Markall, G.R.},
  \bibinfo{author}{Kelly, P.H.J.}, \bibinfo{year}{2016}.
\newblock \bibinfo{title}{Firedrake: automating the finite element method by
  composing abstractions}.
\newblock \bibinfo{journal}{ACM Trans. Math. Softw.} \bibinfo{volume}{43},
  \bibinfo{pages}{24:1--24:27}.
\newblock \URLprefix \url{http://arxiv.org/abs/1501.01809},
  \DOIprefix\doi{10.1145/2998441}, \href{http://arxiv.org/abs/1501.01809}{{\tt
  arXiv:1501.01809}}.
\bibitem[{De~los Reyes and Gonzalez~Andrade(2009)}]{DelosreyesGonzalez09}
\bibinfo{author}{De~los Reyes, J.}, \bibinfo{author}{Gonzalez~Andrade, S.},
  \bibinfo{year}{2009}.
\newblock \bibinfo{title}{Path following methods for steady laminar {B}ingham
  flow in cylindrical pipes}.
\newblock \bibinfo{journal}{ESAIM Mathematical Modelling and Numerical
  Analysis} \bibinfo{volume}{43}, \bibinfo{pages}{81--117}.
\newblock \DOIprefix\doi{10.1051/m2an/2008039}.
\bibitem[{Richter and Mehlmann(2017)}]{RichterMehlmann2018}
\bibinfo{author}{Richter, T.}, \bibinfo{author}{Mehlmann, C.},
  \bibinfo{year}{2017}.
\newblock \bibinfo{title}{An accelerated {N}ewton method for nonlinear
  materials in structure mechanics and fluid mechanics}, in:
  \bibinfo{editor}{Radu, F.}, \bibinfo{editor}{K.Kumar},
  \bibinfo{editor}{Berre, I.}, \bibinfo{editor}{Nordbotten, J.},
  \bibinfo{editor}{Pop, I.} (Eds.), \bibinfo{booktitle}{Numerical Mathematics
  and Advanced Applications}, \bibinfo{publisher}{Springer}. pp.
  \bibinfo{pages}{345--353}.
\newblock \bibinfo{note}{Enumath 2017 Bergen}.
\bibitem[{Ringeisen et~al.(2021)Ringeisen, Tremblay and Losch}]{Ringeisen21}
\bibinfo{author}{Ringeisen, D.}, \bibinfo{author}{Tremblay, L.B.},
  \bibinfo{author}{Losch, M.}, \bibinfo{year}{2021}.
\newblock \bibinfo{title}{Non-normal flow rules affect fracture angles in sea
  ice viscous--plastic rheologies}.
\newblock \bibinfo{journal}{The Cryosphere} \bibinfo{volume}{15},
  \bibinfo{pages}{2873--2888}.
\bibitem[{Rudi et~al.(2020)Rudi, Shih and Stadler}]{RudiShihStadler20}
\bibinfo{author}{Rudi, J.}, \bibinfo{author}{Shih, Y.h.},
  \bibinfo{author}{Stadler, G.}, \bibinfo{year}{2020}.
\newblock \bibinfo{title}{Advanced {N}ewton methods for geodynamical models of
  {S}tokes flow with viscoplastic rheologies}.
\newblock \bibinfo{journal}{Geochemistry, Geophysics, Geosystems}
  \bibinfo{volume}{21}.
\bibitem[{Ruge and Stüben(1987)}]{RugeStuben}
\bibinfo{author}{Ruge, J.W.}, \bibinfo{author}{Stüben, K.},
  \bibinfo{year}{1987}.
\newblock \bibinfo{title}{4. Algebraic Multigrid}.
\newblock pp. \bibinfo{pages}{73--130}.
\newblock \URLprefix
  \url{https://epubs.siam.org/doi/abs/10.1137/1.9781611971057.ch4}.
\bibitem[{Tsamados et~al.(2013)Tsamados, Feltham and Wilchinsky}]{Tsamados2013}
\bibinfo{author}{Tsamados, M.L.}, \bibinfo{author}{Feltham, D.L.},
  \bibinfo{author}{Wilchinsky, A.V.}, \bibinfo{year}{2013}.
\newblock \bibinfo{title}{Impact of a new anisotropic rheology on simulations
  of {A}rctic sea ice}.
\newblock \bibinfo{journal}{Journal of Geophysical Research: Oceans}
  \bibinfo{volume}{118}, \bibinfo{pages}{91--107}.
\bibitem[{Tuhkuri and Poloj{\"a}rvi(2018)}]{TuhkuriPolojarvi18}
\bibinfo{author}{Tuhkuri, J.}, \bibinfo{author}{Poloj{\"a}rvi, A.},
  \bibinfo{year}{2018}.
\newblock \bibinfo{title}{A review of discrete element simulation of
  ice--structure interaction}.
\newblock \bibinfo{journal}{Philosophical Transactions of the Royal Society A:
  Mathematical, Physical and Engineering Sciences} \bibinfo{volume}{376},
  \bibinfo{pages}{20170335}.
\bibitem[{West et~al.(2021)West, O'Connor, Parno, Krackow and
  Polashenski}]{WestOConnorParnoEtAl21}
\bibinfo{author}{West, B.}, \bibinfo{author}{O'Connor, D.},
  \bibinfo{author}{Parno, M.}, \bibinfo{author}{Krackow, M.},
  \bibinfo{author}{Polashenski, C.}, \bibinfo{year}{2021}.
\newblock \bibinfo{title}{Improving discrete element simulations of sea ice
  break up: Applications to {Nares} {Strait}}.
\newblock \bibinfo{journal}{arXiv preprint arXiv:2105.05143} .
\bibitem[{Wilchinsky and Feltham(2011)}]{Wilchinsky2012}
\bibinfo{author}{Wilchinsky, A.V.}, \bibinfo{author}{Feltham, D.L.},
  \bibinfo{year}{2011}.
\newblock \bibinfo{title}{Modeling coulombic failure of sea ice with leads}.
\newblock \bibinfo{journal}{Journal of Geophysical Research: Oceans}
  \bibinfo{volume}{116}.
\bibitem[{Wright(1997)}]{Wright97}
\bibinfo{author}{Wright, S.}, \bibinfo{year}{1997}.
\newblock \bibinfo{title}{Primal-{D}ual {I}nterior {P}oint {M}ethods}.
\newblock \bibinfo{publisher}{SIAM}, \bibinfo{address}{Philadelphia}.
\bibitem[{Zhang(2021)}]{Zhang21}
\bibinfo{author}{Zhang, J.}, \bibinfo{year}{2021}.
\newblock \bibinfo{title}{Sea ice properties in high-resolution sea ice
  models}.
\newblock \bibinfo{journal}{Journal of Geophysical Research: Oceans}
  \bibinfo{volume}{126}, \bibinfo{pages}{e2020JC016686}.
\bibitem[{Zhang and Hibler~III(1997)}]{ZhangHibler1997}
\bibinfo{author}{Zhang, J.}, \bibinfo{author}{Hibler~III, W.D.},
  \bibinfo{year}{1997}.
\newblock \bibinfo{title}{On an efficient numerical method for modeling sea ice
  dynamics}.
\newblock \bibinfo{journal}{Journal of Geophysical Research: Oceans}
  \bibinfo{volume}{102}, \bibinfo{pages}{8691--8702}.

\end{thebibliography}
\end{document}